\newtheorem{proposition}{Proposition}
\newtheorem{corollary}{Corollary}
\newtheorem{lemma}{Lemma}
\newtheorem{theorem}{Theorem}
\newtheorem{remark}[theorem]{Remark}
\theoremstyle{definition}
\newtheorem{example}{Example}
\newcommand{\Ball}{\mathcal{B}}
\newcommand{\D}{\mathcal{D}}
\newcommand{\cl}{\mathrm{cl}}
\newcommand{\co}{\mathrm{conv}}
\newcommand{\N}{\mathbb{N}}
\newcommand{\oh}{o}
\newcommand{\R}{\mathbb{R}}
\newcommand{\Sph}{\mathcal{S}}
\DeclareMathOperator*{\Argmin}{Arg\, min}
\DeclareMathOperator*{\Argmax}{Arg\, max}
\DeclareMathOperator*{\dist}{dist}
\DeclareMathOperator*{\Limsup}{Lim\, sup}
\DeclareMathOperator*{\Er}{Er}
\title{Outer limits of subdifferentials for min-max type functions}
\author{Andrew Eberhard\thanks{School of Science, RMIT University}, Vera Roshchina$^*$\thanks{CIAO, Federation University Australia}, Tian Sang$^*$}
\begin{document}

\maketitle

\begin{abstract}
We generalise the outer subdifferential construction suggested by C\'anovas, Henrion, L\'opez and Parra for max type functions to pointwise minima of regular Lipschitz functions. We also answer an open question about the relation between the outer subdifferential of the support of a regular function and the end set of its subdifferential posed by Li, Meng and Yang.
\end{abstract}

\section{Introduction}

Our motivation for the study of outer limits of subdifferentials is the problem of constructive evaluation of error bounds. The error bound modulus measures whether a given function is steep enough locally outside of its level set. This idea stems from the works of Hoffman \cite{Hoffman:1952} and {\L}ojasiewicz \cite{Lojasiewicz}. Error bounds are crucial for a range of stability questions, for the existence of exact penalty functions, and for the convergence of numerical methods. The literature on error bounds is vast, and we refer the reader to the following selection of recent works and classic review papers for more details \cite{AzeCorvellec,Kruger:2014:1,Kruger:2015:2,Herion:2001,Fabian:2010,NgaiThera,WuYe,AzeSurvey,Aze:2002}. 
In this work we focus on the constructive evaluation of error bound modulus for structured continuous functions.

Let $f:X\to \R$ be a continuous function defined on an open set $X\subseteq \R^n$. We define the sublevel set
$$
S(\bar x) = \{x\in X\, |\, f(x)\leq f(\bar x)\},
$$
where $\bar x\in X$. We say that $f$ has a local (linear) error bound at $\bar x$ if there exists a constant $L>0$ such that
\begin{equation}\label{eq:error-bound}
L\dist\left(  x,S(\bar x)\right)  \leq \max\{0, f\left(  x\right)  -f\left(  \bar{x}\right)\}
\end{equation}
for all points $x$ in a sufficiently small neighbourhood of $\bar x$. Here $\dist (x,A)= \inf_{v\in A}\|x-a\|$ is the distance from $x$ to $A$. Taking the supremum over all constants $L$ that satisfy \eqref{eq:error-bound} over all neighbourhoods of  $\bar x$ we arrive at an exact quantity called the error bound modulus of $f$ at $\bar x$, which can be explicitly expressed as
\[
\Er f\left(  \bar{x}\right)  :=\liminf_{\substack{x\rightarrow
		\bar{x}\\f\left(  x\right)  >f\left(  \bar{x}\right)  }}\frac{f\left(
	x\right)  -f\left(  \bar{x}\right)  }{\dist\left(  x,S(\bar x)  \right)  }.
\]
It is possible to obtain sharp estimates of the error bound modulus $\Er f\left(  \bar{x}\right) $ for sufficiently structured functions by means of subdifferential calculus. For continuous functions that we are considering in this paper the error bound modulus is bounded from below by the distance from zero to the outer limits of Fr\'echet subdifferentials (see \cite{Fabian:2010}),
\begin{equation}\label{eq:main-errb-outer}
\Er f\left(  \bar{x}\right)  \geq \dist \left( 0, \Limsup_{x\to \bar x \atop f\left(x\right)  \downarrow f\left(\bar{x}\right)}\partial f(x)\right),
\end{equation}
with equality holding when $f$ is sufficiently  regular (for instance convex), see \cite[Theorem 5 and Proposition 10]{Fabian:2010}. In \cite{KaiwenMinghua} this equality is proved for a lower $C^1$ function and an additional upper estimate of the error bound of a regular locally Lipschitz function is given via the distance to the outer limits of the Fr\'echet subdifferentials of the subdifferential support function, and such limits are in turn expressed using the notion of the {\em end} of a closed convex set introduced in \cite{Hu}. We emphasise here that the inequality \eqref{eq:main-errb-outer} can be used to establish the inequality $\Er f\left(  \bar{x}\right)>0$, hence, the computation of the outer limits of subdifferentials in the right-hand side of \eqref{eq:main-errb-outer} is of significant interest even when the equality does not hold. Note that the outer limits of subdifferentials are called outer (limiting) subdifferentials (see \cite{AzeSurvey}).

In this paper we generalise some of the constructive results of \cite{OuterLimits} to the case of min-max type functions, providing an exact description for the outer limits of subdifferentials in the case of polyhedral functions and sharp bounds for a more general case (see Theorems~\ref{con:gen3.2} and \ref{thm:gen3.2lin}). We also strengthen Theorem~3.2 of \cite{OuterLimits} in Corollary~\ref{cor:gen3.2lin} by dropping the affine independence assumption (although the latter result can probably be obtained from the findings of \cite{KaiwenMinghua}). Finally, we answer in the affirmative the open question of \cite{KaiwenMinghua} for the case of functions with sublinear Hadamard directional derivative (see Corollary~\ref{cor:gen3.2} and Remark~\ref{rem:KLY}).

As we work in a finite-dimensional real space, throughout the paper we use the standard scalar product $\langle x, y\rangle = x^T y$, and denote the Euclidean norm by $\|x\|$. We also denote the closed unit ball and the unit sphere by $\Ball$ and $\Sph $ respectively.

\section{Preliminaries}

Recall that a function $f:X\to \R$, where $X$ is an open subset of $\R^n$, is {\em Hadamard directionally differentiable} at $x\in X$ for $p\in \R^n$ if the limit
$$
f'(x;p) = \lim_{t\downarrow 0 \atop p'\to p} \frac{f(x+tp')- f(x)}{t}
$$
exists and is finite. The quantity $f'(x;p)$ is called the (Hadamard) {\em directional derivative} of $f$ at $x$ in the direction $p$. It follows from the definition that the directional derivative is a positively homogeneous function of degree one, i.e. if $f$ is Hadamard directionally differentiable at $x\in X$, then
\begin{equation}\label{eq:HadDDHomogeneous}
f'(x;\lambda p) = \lambda f'(x;p) \quad \forall \, p\in \R^n, \, \lambda>0.
\end{equation}
Hadamard directionally differentiable functions enjoy certain continuity properties that we summarise in the next proposition. These properties are well-known (e.g. see \cite{DemRub}), but we provide a proof here for convenience. Throughout the paper we assume that $X$ is an open subset of $\R^n$.
\begin{proposition} Let $f:X\to \R$ be Hadamard directionally differentiable at $\bar x\in X$. Then the directional derivative $f'(\bar x;\cdot)$ is a continuous function; moreover,
\begin{equation}\label{eq:HadProp}
	f(\bar x+s) = f(\bar x) +f'(\bar x;s) + \oh (s), \qquad \frac{\oh(s)}{\|s\|} \underset{s \to 0}{\longrightarrow} 0.
\end{equation}
\end{proposition}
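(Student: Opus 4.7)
The plan is to establish continuity of $f'(\bar x;\cdot)$ first and then derive the expansion \eqref{eq:HadProp} from it using the positive homogeneity \eqref{eq:HadDDHomogeneous}.

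For continuity, I would work directly from the joint-limit definition. Fix $p\in\R^n$ and $\varepsilon>0$; the existence of $f'(\bar x;p)$ as a joint limit in $(t,p')$ yields $\delta,\tau>0$ such that
$$\left|\frac{f(\bar x+tp')-f(\bar x)}{t}-f'(\bar x;p)\right|<\varepsilon$$
whenever $\|p'-p\|<\delta$ and $0<t<\tau$. For any such $p'$, holding $p'$ fixed inside the joint limit and letting $t\downarrow 0$ recovers $f'(\bar x;p')$ itself (since the constant sequence equal to $p'$ is admissible in the definition). Passing to the limit as $t\downarrow 0$ in the displayed inequality therefore gives $|f'(\bar x;p')-f'(\bar x;p)|\le\varepsilon$, which proves continuity of $f'(\bar x;\cdot)$ at $p$.

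For the expansion, I would reduce it to the continuity just proved. Set $r(s):=f(\bar x+s)-f(\bar x)-f'(\bar x;s)$; I must show $r(s)/\|s\|\to 0$ as $s\to 0$. Take an arbitrary sequence $s_k\to 0$ with $s_k\neq 0$, write $s_k=t_k p_k$ where $t_k=\|s_k\|\downarrow 0$ and $p_k=s_k/\|s_k\|\in\Sph$, and by compactness of $\Sph$ pass to a subsequence along which $p_k\to p$ for some $p\in\Sph$. The joint Hadamard limit then gives
$$\frac{f(\bar x+t_k p_k)-f(\bar x)}{t_k}\longrightarrow f'(\bar x;p),$$
while positive homogeneity \eqref{eq:HadDDHomogeneous} together with continuity of $f'(\bar x;\cdot)$ yields
$$\frac{f'(\bar x;t_k p_k)}{t_k}=f'(\bar x;p_k)\longrightarrow f'(\bar x;p).$$
Subtracting shows $r(s_k)/\|s_k\|\to 0$ along the subsequence, and the standard subsequence-of-subsequence argument upgrades this to convergence along the full sequence, establishing \eqref{eq:HadProp}.

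The only subtle point is the step in the continuity argument where the joint-limit definition is used to conclude that, for $p'$ fixed near $p$, the one-variable limit in $t$ exists and equals $f'(\bar x;p')$; this follows immediately from feeding the constant sequence $p'$ into the definition. Everything else is routine, relying only on compactness of $\Sph$ and the homogeneity identity \eqref{eq:HadDDHomogeneous}, so I do not anticipate any real obstacle.
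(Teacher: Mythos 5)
Your proof is correct and follows essentially the same route as the paper's: continuity is obtained from the joint-limit definition (yours via $\varepsilon$--$\delta$, the paper's via a diagonal sequence $t_k<1/k$), and the expansion is obtained by normalising $s_k$, using compactness of $\Sph$, homogeneity, and the continuity just proved (you argue directly with subsequences where the paper argues by contradiction). No gaps.
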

\begin{proof} We first show that the Hadamard directional derivative is continuous. Choose an arbitrary $p\in \R^n$ and a sequence $\{p_k\}$, $p_k \to p$.
By the definition of the directional derivative for every $k\in \N$ there exist $t_k$ such that $0<t_k<1/k$ and
\begin{equation}\label{eq:pf01}
f'(\bar x;p_k) = \frac{f(\bar x + t_k p_k)- f(\bar x)}{t_k}+ \delta_k, \quad |\delta_k|<1/k.
\end{equation}
Since $t_k\downarrow 0$, $p_k\to p$, and $f$ is Hadamard directionally differentiable at $\bar x$, we have
\begin{equation}\label{eq:pf02}
\lim_{k\to \infty} \frac{f(\bar x+t_kp_k)-f(\bar x)}{t_k} = f'(\bar x;p).
\end{equation}
Now passing to the limit on both sides of \eqref{eq:pf01} and using \eqref{eq:pf02}, we obtain
$$
\lim_{k\to \infty} f'(\bar x;p_k)  = \lim_{k\to \infty} \frac{f(\bar x+t_kp_k)-f(\bar x)}{t_k} + \lim_{k\to \infty} \delta_k = f'(\bar x;p),
$$
and so the directional derivative is continuous.

It remains to show the relation \eqref{eq:HadProp}. Assume the contrary. Then there is $\bar x\in X$, a constant $c>0$ and a sequence $\{s_k\}$, $s_k \to 0$ such that
\begin{equation*}
\frac{|f(\bar x+s_k) - f(\bar x) -f'(\bar x;s_k)|}{\|s_k\|} > c \quad \forall k\in \N.
\end{equation*}
Without loss of generality we can assume that $s_k/\|s_k\| =: p_k\to p\in \Sph$, then from the continuity of $f'(\bar x;\cdot)$ we get
\begin{align*}
0 & =  \left| f'(\bar x;p)- \lim_{k\to \infty}  f'(\bar x;p_k)\right|\\	
& =  \left| f'(\bar x;p)- \lim_{k\to \infty}  \frac{f'(\bar x;s_k)}{\|s_k\|}\right|\qquad \text{(using \eqref{eq:HadDDHomogeneous})}\\
& =  \left| \lim_{k\to \infty} \frac{f(\bar x+\|s_k\|p_k) - f(\bar x)}{\|s_k\|} - \lim_{k\to \infty}  \frac{f'(\bar x;s_k)}{\|s_k\|} \right|\\ 
& = \lim_{k\to \infty} \frac{|f(\bar x+s_k) - f(\bar x) -f'(\bar x;s_k)|}{\|s_k\|}\geq c,
\end{align*}
which is impossible by our assumption that $c>0$.
\end{proof}

In this work our focus is on the functions with sublinear Hadamard directional derivatives and finite minima of such functions. The former are called subdifferentiable functions in \cite{DemRub}, however this notation is not universally accepted (e.g. in \cite{Kruger} subdifferentiable functions are the ones with nonempty Fr\'echet subdifferential). To avoid possible confusion with definitions, throughout the paper we sacrifice brevity for clarity and use the full description. Regular Lipschitz functions have sublinear Hadamard directional derivatives, see \cite[Theorem~9.16]{RockWets:1998}, therefore all results obtained here for functions with sublinear Hadamard directional derivatives also apply to regular Lipschitz functions.


Recall that the Fr\'{e}chet subdifferential of a function $f:X\rightarrow\mathbb{R}$ at $\bar{x}\in X$ is the set
\[
\partial f(\bar{x})=\left\{  v\in\mathbb{R}^{n}\,\left\vert \,\liminf
_{x\rightarrow\bar{x},x\neq\bar{x}}\frac{f(x)-f(\bar{x})-\langle v,x-\bar
	{x}\rangle}{\Vert x-\bar{x}\Vert}\geq0\right.  \right\}.
\]
For Hadamard directionally differentiable function $f:X\to \R$ one has (see \cite[Proposition 1.17]{Kruger})
\begin{equation}\label{eq:FrechHad}
\partial f(\bar x) = \left\{ v\in \R^n\,\left|\, f'(\bar x;p)\geq \langle v,p\rangle \; \forall p\in \R^n \right. \right\}.
\end{equation}
When the directional derivative is sublinear, the Fr\'echet subdifferential of $f$ at $\bar x \in X$ coincides with the subdifferential of the directional derivative at $0$, so we have
\begin{equation}\label{eq:FrechHadSuppGeneral}
f'(\bar x;p) = \max_{v\in \partial f(\bar x )}\langle v,p\rangle \quad \forall p\in \R^n;
\end{equation}
moreover, for a convex function $f:\R^n\to \R$ the Fr\'echet subdifferential coincides with the classic Moreau-Rockafellar subdifferential,
\begin{align}\label{eq:FrechConv}
\partial f (x)
& = \{v\in \R^n \, |\,  f(y)-f(x) \geq \langle v,y-x\rangle  \; \forall\, y\in \R^n \}\notag\\
& = \{v\in \R^n \, |\,  f'(x;p)\geq \langle v,p\rangle  \;\forall\, p \in \Sph \}.
\end{align}
We will be using the following result explicitly (see \cite[ Chap. VI, Example 3.1]{ConvAnalVolOne}).
\begin{proposition}\label{prop:classic-subdiff} Let $h:\R^n\to \R$ be a sublinear function,
$$
h(x) = \max_{v\in C} \langle x, v\rangle,
$$
where $C \subset \R^n$ is a compact convex set. Then
\begin{equation}\label{eq:subdiff-classic-boundary}
\partial h(x) = \Argmax_{v\in C} \langle x, v\rangle.
\end{equation}
\end{proposition}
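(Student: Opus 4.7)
The plan is to exploit the fact that $h$ is sublinear, hence convex, so by \eqref{eq:FrechConv} the Fréchet subdifferential coincides with the classical convex subdifferential
$$\partial h(x) = \{w \in \R^n \mid h(y) - h(x) \geq \langle w, y-x\rangle \;\forall y \in \R^n\}.$$
I would then prove the two inclusions in \eqref{eq:subdiff-classic-boundary} separately.

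For the inclusion $\Argmax_{v \in C}\langle x, v\rangle \subseteq \partial h(x)$, I would take $\bar v \in C$ with $\langle x, \bar v\rangle = h(x)$ and simply observe that for any $y \in \R^n$, $h(y) \geq \langle y, \bar v\rangle$ by definition of $h$, so $h(y) - h(x) \geq \langle \bar v, y-x\rangle$; this is immediate.

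The reverse inclusion is the meatier part. Given $w \in \partial h(x)$, I would first show $\langle w, x\rangle = h(x)$ by plugging $y = tx$ into the subgradient inequality and exploiting positive homogeneity: the inequality $(t-1)h(x) = h(tx) - h(x) \geq (t-1)\langle w, x\rangle$ must hold for both $t > 1$ and $t < 1$, forcing equality. Next I would show $w \in C$ by contradiction: if $w \notin C$, the separation theorem for the closed convex set $C$ supplies a direction $p$ with $\langle p, w\rangle > \max_{v \in C}\langle p, v\rangle = h(p)$; but the subgradient inequality combined with $\langle w, x\rangle = h(x)$ gives $h(p) \geq h(x) + \langle w, p - x\rangle = \langle w, p\rangle$, a contradiction.

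The main obstacle, such as it is, lies in the argument that $w \in C$, since it requires invoking separation; the equality $\langle w, x\rangle = h(x)$ is the standard homogeneity trick for sublinear functions. Everything else is mechanical once the equivalence with the convex subdifferential via \eqref{eq:FrechConv} is in place.
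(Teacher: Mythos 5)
Your proof is correct and complete. The paper does not actually prove this proposition --- it simply cites it as a classical fact from Hiriart-Urruty and Lemar\'echal (Chap.~VI, Example~3.1) --- and your argument is precisely the standard proof of that cited result: identify the Fr\'echet subdifferential with the convex one via \eqref{eq:FrechConv}, get the easy inclusion from the definition of the support function, and for the converse use the homogeneity trick with $y=tx$ (taking $t\in(0,1)$ and $t>1$) to force $\langle w,x\rangle=h(x)$, followed by separation of $w$ from the compact convex set $C$ to conclude $w\in C$. Nothing is missing.
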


We will also utilise the following optimality condition (see Corollary~1.12.3 in \cite{Kruger}).

\begin{proposition}\label{prop:optimality} Let $ f_1 : X \to \R$ and $f_2 : X \to \R$  and assume that $f_1$ is Fr\'echet differentiable at $x$. If $f_1+f_2$ attains a local minimum at $x$, then $-\nabla f_1(x) \subset \partial f_2(x)$.
\end{proposition}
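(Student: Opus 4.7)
The plan is to unpack the local minimum condition, use the Fr\'echet differentiability of $f_1$ to absorb the linear part into the subdifferential inequality for $f_2$, and then read off the conclusion directly from the definition of the Fr\'echet subdifferential.

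First I would fix a neighbourhood $U\subseteq X$ of $x$ on which $(f_1+f_2)(y)\geq (f_1+f_2)(x)$ holds, and rewrite this as
\begin{equation*}
f_2(y)-f_2(x)\geq f_1(x)-f_1(y) \qquad \forall y\in U.
\end{equation*}
Next I would invoke the Fr\'echet differentiability of $f_1$ at $x$ to write
\begin{equation*}
f_1(y)-f_1(x)=\langle \nabla f_1(x), y-x\rangle +\oh(\|y-x\|),
\end{equation*}
so that the inequality above becomes
\begin{equation*}
f_2(y)-f_2(x)-\langle -\nabla f_1(x), y-x\rangle \geq -\oh(\|y-x\|)\qquad \forall y\in U.
\end{equation*}

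Dividing by $\|y-x\|$ for $y\neq x$ and taking the $\liminf$ as $y\to x$, the right-hand side tends to zero, and so
\begin{equation*}
\liminf_{y\to x, y\neq x}\frac{f_2(y)-f_2(x)-\langle -\nabla f_1(x), y-x\rangle}{\|y-x\|}\geq 0,
\end{equation*}
which by the definition of the Fr\'echet subdifferential recalled just before Proposition~\ref{prop:classic-subdiff} means exactly that $-\nabla f_1(x)\in \partial f_2(x)$.

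There is no real obstacle here: the argument is a one-line manipulation once the differentiable summand $f_1$ is peeled off using its first-order expansion. The only point to be careful about is that the Fr\'echet differentiability of $f_1$ produces a true $\oh(\|y-x\|)$ term, uniform in the direction $(y-x)/\|y-x\|$, so that after division by $\|y-x\|$ the error vanishes in the $\liminf$; no regularity of $f_2$ is needed for this step.
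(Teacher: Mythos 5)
Your argument is correct and complete: peeling off the differentiable summand via its first-order expansion, dividing by $\|y-x\|$, and passing to the $\liminf$ is exactly the standard proof of this Fermat-type sum rule, and it matches the definition of the Fr\'echet subdifferential as stated in the paper. The paper itself gives no proof of Proposition~\ref{prop:optimality} (it simply cites Corollary~1.12.3 of \cite{Kruger}), so there is nothing to contrast with; your derivation is precisely what that reference establishes.
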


Let $f:X\to \R$ be a pointwise minimum of a finite set of functions with sublinear Hadamard directional derivatives. We have explicitly
\begin{equation}\label{eq:min-function}
f (x) = \min_{i\in I} f_i(x) \quad \forall x\in X,
\end{equation}
where $f_i:X\to \R$ are Hadamard directionally differentiable at $\bar x\in X$ with sublinear directional derivatives, so that (see \eqref{eq:FrechHadSuppGeneral})
\begin{equation}\label{eq:FrechHadSupp}
	f'_i(\bar x;p) = \max_{v\in \partial f_i(\bar x )}\langle v,p\rangle \quad \forall p\in \R^n,
\end{equation}
and $I$ is a finite index set. Observe that the relation \eqref{eq:HadProp} is valid for each individual function $f_i$, $i\in I$, so that we have
\begin{equation}\label{eq:HadPropIndividual}
	f_i(\bar x+s) = f_i(\bar x) +f_i'(\bar x;s) + \oh_i (s), \qquad \frac{\oh_i(s)}{\|s\|} \underset{s \to 0}{\longrightarrow} 0.
\end{equation}
Throughout the paper we use the following two active index sets.
$$
I(x) = \{i\in I\,|\, f(x) = f_i(x) \},
$$
\begin{equation}\label{eq:Ip}
I(x,p) = \left\{i_0\in I(x) \, \Bigl|\, \max_{v\in \partial f_{i_0}(x)}\langle v,p\rangle = \min_{i\in I(x)}\max_{v\in \partial f_i(x)}\langle v,p\rangle\right\}.
\end{equation}

We will need the following  well known relation (see \cite{DemRub}).
\begin{proposition}\label{prop:ddmin} Let $f:X\to \R$ be a pointwise minimum of a finite number of functions with sublinear Hadamard directional derivatives, as in \eqref{eq:min-function}. Then
$$
	f'(\bar x ; p) = \min_{i\in I(\bar x)} f_i'(\bar x;p) \quad \forall\, p \in \Sph.
$$
\end{proposition}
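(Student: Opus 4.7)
The plan is to establish the inequality in both directions along an arbitrary sequence $t_k \downarrow 0$, $p_k \to p$, and to obtain existence of the Hadamard directional derivative of $f$ as a by-product. For the easy direction $f'(\bar x;p) \le \min_{i \in I(\bar x)} f_i'(\bar x;p)$, I would fix $i \in I(\bar x)$. Since $f \le f_i$ pointwise on $X$ and $f(\bar x) = f_i(\bar x)$,
$$\frac{f(\bar x + t_k p_k) - f(\bar x)}{t_k} \le \frac{f_i(\bar x + t_k p_k) - f_i(\bar x)}{t_k} \longrightarrow f_i'(\bar x;p)$$
by Hadamard directional differentiability of $f_i$, so every accumulation value of the left-hand side is bounded above by $f_i'(\bar x;p)$, and hence by $\min_{i \in I(\bar x)} f_i'(\bar x;p)$.

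For the reverse direction, for each $k$ I would select $i_k \in I(\bar x + t_k p_k)$, so that $f(\bar x + t_k p_k) = f_{i_k}(\bar x + t_k p_k)$. Finiteness of $I$ allows me to pass to a subsequence along which $i_k \equiv i_0$; continuity of each $f_j$ (a consequence of the preceding proposition via \eqref{eq:HadProp}) together with continuity of $f = \min_{i\in I} f_i$ yields
$$f_{i_0}(\bar x) = \lim_{k \to \infty} f_{i_0}(\bar x + t_k p_k) = \lim_{k \to \infty} f(\bar x + t_k p_k) = f(\bar x),$$
so $i_0 \in I(\bar x)$. Now
$$\lim_{k \to \infty} \frac{f(\bar x + t_k p_k) - f(\bar x)}{t_k} = \lim_{k \to \infty} \frac{f_{i_0}(\bar x + t_k p_k) - f_{i_0}(\bar x)}{t_k} = f_{i_0}'(\bar x;p) \ge \min_{i \in I(\bar x)} f_i'(\bar x;p).$$
Combining the two directions, every accumulation value of the $f$-difference quotient coincides with $\min_{i \in I(\bar x)} f_i'(\bar x;p)$, so the Hadamard limit exists and equals this value, which is the desired formula.

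The main obstacle is the step ensuring $i_0 \in I(\bar x)$ in the lower-bound argument: this is where the assumption that each $f_i$ is Hadamard directionally differentiable is exploited beyond the mere existence of the one-sided derivatives, via the local expansion \eqref{eq:HadProp} which delivers continuity. Everything else is a routine passage to the limit combined with the subsequence extraction justified by $|I| < \infty$; no sublinearity of the individual $f_i'(\bar x;\cdot)$ is actually needed for this proposition, although it is the standing hypothesis in the surrounding framework.
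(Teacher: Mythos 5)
Your proof is correct and follows essentially the same route as the paper's: the paper simply observes that $I(x)\subseteq I(\bar x)$ near $\bar x$, replaces $f$ by $\min_{i\in I(\bar x)}f_i$, and interchanges the limit with the finite minimum, which is exactly what your two-sided sequential argument (upper bound from $f\le f_i$ with $f(\bar x)=f_i(\bar x)$, lower bound from extracting a constant active index $i_0\in I(\bar x)$) spells out in detail. Your closing remark that sublinearity of the $f_i'(\bar x;\cdot)$ is not used is also accurate; only Hadamard directional differentiability and the resulting continuity at $\bar x$ enter the argument.
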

\begin{proof} The proof follows from the definition of directional derivative. Indeed, for all $x$ in a sufficiently small neighbourhood of $\bar x$ we have $I(x) \subset I(\bar x)$. Therefore
\begin{align*}
	\lim_{t\downarrow 0\atop p'\to p}\frac{f(\bar x + t p')- f(\bar x)}{t}
	& =  \lim_{t\downarrow 0\atop p'\to p}\frac{\min_{i\in I(\bar x)}f_i(\bar x + t p')- f(\bar x)}{t}\\
	& =  \lim_{t\downarrow 0\atop p'\to p}\frac{\min_{i\in I(\bar x)}[f_i(\bar x + t p')- f_i(\bar x)]}{t}\\
	& =  \min_{i\in I(\bar x)}\lim_{t\downarrow 0\atop p'\to p}\frac{f_i(\bar x + t p')- f_i(\bar x)}{t} = \min_{i\in I(\bar x)} f_i'(\bar x,p).
\end{align*}
\end{proof}

The next relation is well known (see \cite{FrechetCalculus} for the discussion of more general calculus rules for Fr\'echet subdifferentials) and follows directly from the definition of the Fr\'echet subdifferential and Proposition~\ref{prop:ddmin}. We provide a proof here for the sake of completeness.

\begin{proposition}\label{prop:calc-intersection} Let $f:X\to \R$ be a finite minimum of Hadamard directionally differentiable functions with sublinear derivatives at $\bar x \in X$. Then the Fr\'echet subdifferential of $f$ at $\bar x \in X$ is the intersection of the Fr\'echet subdifferentials of the active functions. In other words, given $f:X\to \R$ such that
	$$
	f(x) = \min_{i\in I} f_i(x) \quad \forall x\in X,
	$$
	where $I$ is a finite index set, and $f_i:X\to \R $ are Hadamard directionally differentiable with sublinear directional derivatives at $\bar x\in X$, one has
	$$
	\partial f(\bar x) = \bigcap_{i\in I(\bar x)} \partial f_i(\bar x).
	$$
\end{proposition}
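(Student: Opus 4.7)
The plan is to chain three characterisations: the Fr\'echet subdifferential via Hadamard derivatives for $f$ itself, the computation of $f'(\bar x;\cdot)$ as a pointwise minimum from Proposition~\ref{prop:ddmin}, and then the same Fr\'echet/derivative characterisation applied individually to each $f_i$. Concretely, I would show the two inclusions in a single string of equivalences: $v\in\partial f(\bar x)$ iff $\langle v,p\rangle\le f'(\bar x;p)$ for every $p\in\R^n$, iff $\langle v,p\rangle\le \min_{i\in I(\bar x)} f_i'(\bar x;p)$ for every $p$, iff for each $i\in I(\bar x)$ we have $\langle v,p\rangle\le f_i'(\bar x;p)$ for every $p$, iff $v\in\partial f_i(\bar x)$ for every $i\in I(\bar x)$.

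The preliminary step, which is the only thing beyond routine manipulation, is to justify that the characterisation \eqref{eq:FrechHad} applies to $f$ at all. For this I need $f$ to be Hadamard directionally differentiable at $\bar x$, and this is exactly what the computation in the proof of Proposition~\ref{prop:ddmin} establishes: the limit defining $f'(\bar x;p)$ exists and equals $\min_{i\in I(\bar x)} f_i'(\bar x;p)$ (finite, since each $f_i'(\bar x;\cdot)$ is finite and the index set is finite). With this in hand, \eqref{eq:FrechHad} is available for both $f$ and every $f_i$, and the equivalences listed above go through without further work.

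The main (minor) obstacle is a subtle quantifier swap in one of the equivalences: from ``$\langle v,p\rangle\le \min_{i\in I(\bar x)} f_i'(\bar x;p)$ for every $p$'' to ``for every $i\in I(\bar x)$, $\langle v,p\rangle\le f_i'(\bar x;p)$ for every $p$''. This is immediate because the minimum of finitely many numbers dominates $\langle v,p\rangle$ iff each of them does, so one direction is trivial and the other follows by choosing, for each fixed $p$, the index attaining the minimum; but it is worth stating explicitly since it is the only place in the argument where the active index set $I(\bar x)$ plays a non-cosmetic role.

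I do not expect to need the sublinearity of the $f_i'(\bar x;\cdot)$ in this particular proof (it is what guarantees the secondary formula \eqref{eq:FrechHadSuppGeneral}, but the characterisation \eqref{eq:FrechHad} alone suffices for the inclusion argument). Nor do I expect to need Proposition~\ref{prop:optimality} or Proposition~\ref{prop:classic-subdiff}: the statement is essentially a direct consequence of Proposition~\ref{prop:ddmin} combined with the derivative-based description of the Fr\'echet subdifferential, and the total length should be only a few lines.
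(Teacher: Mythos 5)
Your proposal is correct and follows essentially the same route as the paper: the paper's proof invokes Proposition~\ref{prop:ddmin} to get $f'(\bar x;p)=\min_{i\in I(\bar x)}f_i'(\bar x;p)$ and then rewrites $\partial f(\bar x)$ via the characterisation \eqref{eq:FrechHad}, performing exactly the quantifier interchange you single out before recognising the resulting intersection. Your additional remarks (that Hadamard directional differentiability of $f$ is supplied by Proposition~\ref{prop:ddmin} and that sublinearity is not actually needed for this step) are accurate refinements of the same argument rather than a different one.
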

\begin{proof} First of all, from Proposition~\ref{prop:ddmin} we have
\begin{equation}\label{eq:pf06}
f'(\bar x ; p) = \min_{i\in I(\bar x)} f_i'(\bar x;p) \quad \forall\, p \in \Sph.
\end{equation}

Using \eqref{eq:FrechHad} and \eqref{eq:pf06} we have
\begin{align*}
\partial f(\bar x)
& = \left\{ v\in \R^n\,\left|\, \min_{i\in I(\bar x)}f_i'(\bar x;p)\geq \langle v,p\rangle \; \forall p\in \R^n \right. \right\}\\
& = \left\{ v\in \R^n\,\left|\, f_i'(\bar x;p)\geq \langle v,p\rangle \; \forall p\in \R^n \quad \forall i \in I(\bar x)\right. \right\}\\
& = \bigcap_{i\in I(\bar x)}\left\{ v\in \R^n\,\left|\, f_i'(\bar x;p)\geq \langle v,p\rangle \; \forall p\in \R^n \right. \right\} = \bigcap_{i\in I(\bar x)}\partial f_i(\bar x).
\end{align*}
\end{proof}

\begin{proposition}\label{prop:submax} Let $g:X\to \R$ be a pointwise maximum of a finite number of $C^1(X)$ functions, i.e. $g(x) = \max_{j\in J} g_j(x)$, $g_j\in C^1(X)$, and $|J|<\infty$.  Then $g$ has a nonempty Fr\'echet subdifferential that can be expressed explicitly as
\begin{equation}\label{eq:sub-cograd}
\partial g(x) = \co_{i\in J(x)} \{\nabla g_j(x)\},
\end{equation}
where $J(x)$ is the active index set. Moreover, the function $g$ is Hadamard directionally differentiable with
$$
g'(x;p) = \max_{j\in J(x)} g'_j(x;p) = \max_{v\in \partial g(x)} \langle v, p\rangle = \max_{j \in J(x)} \langle \nabla g_j(x), p\rangle \qquad \forall p\in \R^n.
$$
\end{proposition}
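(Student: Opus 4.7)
The plan is to first establish Hadamard directional differentiability of $g$ together with the explicit formula $g'(x;p)=\max_{j\in J(x)}\langle\nabla g_j(x),p\rangle$, and then extract the Fr\'echet subdifferential from this formula via \eqref{eq:FrechHad}, obtaining \eqref{eq:sub-cograd} as a consequence.

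First I would localize to the active indices. For every $j\notin J(x)$ we have $g_j(x)<g(x)$, and by continuity of all $g_j$ these strict inequalities persist on a neighbourhood of $x$. Hence there is some $\delta>0$ such that $g(y)=\max_{j\in J(x)}g_j(y)$ for all $y$ with $\|y-x\|<\delta$. Using this, and the fact that $g_j(x)=g(x)$ whenever $j\in J(x)$, for $t>0$ small and $p'$ close to $p$ I can write
$$
\frac{g(x+tp')-g(x)}{t}=\max_{j\in J(x)}\frac{g_j(x+tp')-g_j(x)}{t}.
$$
Since $J(x)$ is finite and each $g_j$ is $C^1$, every difference quotient converges jointly as $t\downarrow 0$, $p'\to p$ to $\langle\nabla g_j(x),p\rangle$, and continuity of the (finite) max then yields the Hadamard directional derivative $g'(x;p)=\max_{j\in J(x)}\langle\nabla g_j(x),p\rangle$. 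In particular $g'(x;\cdot)$ is a pointwise maximum of finitely many linear functions, hence sublinear, and coincides with the support function of the compact convex set $C:=\co\{\nabla g_j(x):j\in J(x)\}$, because a linear functional attains its maximum over a convex hull of finitely many points at one of those points.

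At this stage I would invoke \eqref{eq:FrechHad}, which gives
$$
\partial g(x)=\{v\in\R^n\,|\,\langle v,p\rangle\le \max_{w\in C}\langle w,p\rangle\ \forall p\in\R^n\}.
$$
Since $C$ is closed and convex, the bipolar (separation) theorem identifies this set with $C$ itself, proving \eqref{eq:sub-cograd}. The identities $g'(x;p)=\max_{v\in\partial g(x)}\langle v,p\rangle=\max_{j\in J(x)}\langle\nabla g_j(x),p\rangle$ then follow directly, the first being \eqref{eq:FrechHadSuppGeneral} applied to the now-verified sublinear derivative.

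The only mildly technical point is the joint passage to the limit in the expression for the difference quotient: I must pass from the pointwise $C^1$ behaviour of each $g_j$ to a uniform-in-$j$, joint-in-$(t,p')$ statement. Because $J(x)$ is finite and each $\nabla g_j$ is continuous, this reduces to a standard equicontinuity argument on the finite family $\{g_j\}_{j\in J(x)}$, and is the main (but quite modest) obstacle. Everything else is a direct application of the preliminary results already collected in the paper.
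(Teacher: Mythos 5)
Your argument is correct and follows exactly the route the paper indicates: the paper declares the result well known and says it "can be easily deduced from the fact that the Hadamard directional derivative is the support function of the convex hull," which is precisely the deduction you carry out (localization to active indices, identification of $g'(x;\cdot)$ with the support function of $\co\{\nabla g_j(x):j\in J(x)\}$, then recovery of $\partial g(x)$ via \eqref{eq:FrechHad} and separation). You have simply supplied the details the authors omitted, so there is nothing to criticise.
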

\begin{proof} This result is well known, and its proof can be easily deduced from the fact that the Hadamard directional derivative is the support function of the convex hull in \eqref{eq:sub-cograd}.
\end{proof}

\begin{proposition}\label{prop:epsilon-cone} Let $f =\min_{i} f_i$, where $f_i$ are Hadamard directionally differentiable with sublinear directional derivatives at $\bar x\in X$. Then for every $p\in \Sph$ there exists $\varepsilon= \varepsilon(\bar x, p)>0$ such that
$$
I(x) \subseteq I (\bar x,p) \quad \forall x = \bar x + t p + t \varepsilon u, \quad t\in (0,\varepsilon], u \in \Ball.
$$
\end{proposition}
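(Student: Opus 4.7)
The plan is to exploit the Hadamard expansion \eqref{eq:HadPropIndividual} together with positive homogeneity \eqref{eq:HadDDHomogeneous}, and to extract a positive gap from the definition of $I(\bar x, p)$. First I would make an easy reduction: each $f_i$ is continuous at $\bar x$ by \eqref{eq:HadProp}, so any $i \notin I(\bar x)$ satisfies $f_i(\bar x) > f(\bar x)$ and hence $f_i(x) > f(x)$ on some neighbourhood of $\bar x$. This gives $I(x) \subseteq I(\bar x)$ for $x$ close to $\bar x$, and it remains to rule out indices in $I(\bar x) \setminus I(\bar x, p)$.

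Writing $x = \bar x + t(p + \varepsilon u)$, the expansion reads
\[ f_i(x) = f_i(\bar x) + t\, f_i'(\bar x; p + \varepsilon u) + o_i\bigl(t(p + \varepsilon u)\bigr). \]
Two uniformity facts follow at once: since $\|t(p + \varepsilon u)\| \in [t(1-\varepsilon),\, t(1+\varepsilon)]$, the remainder is $o(t)$ uniformly over $u \in \Ball$; and since each $f_i'(\bar x; \cdot)$ is sublinear (hence Lipschitz on bounded sets), $f_i'(\bar x; p + \varepsilon u) = f_i'(\bar x; p) + O(\varepsilon)$ uniformly over $u \in \Ball$. Fix any $i_0 \in I(\bar x, p)$ (nonempty as the argmin of a finite set) and set $\delta := \min_{i \in I(\bar x) \setminus I(\bar x, p)} f_i'(\bar x; p) - f_{i_0}'(\bar x; p) > 0$; the conclusion is trivial if $I(\bar x) \setminus I(\bar x, p) = \emptyset$. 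For $i \in I(\bar x) \setminus I(\bar x, p)$, using $f_i(\bar x) = f_{i_0}(\bar x)$, subtracting the two expansions yields $f_i(x) - f_{i_0}(x) \geq t \delta - C t \varepsilon - o(t)$ for a constant $C$ depending only on the (finitely many) Lipschitz moduli of the $f_i'(\bar x; \cdot)$.

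To finish, I would choose one $\varepsilon > 0$ small enough that (a) $I(x) \subseteq I(\bar x)$ for $\|x - \bar x\| \leq 2\varepsilon$, (b) $C \varepsilon < \delta/4$, and (c) $|o_i\bigl(t(p+\varepsilon u)\bigr)| < t\delta/4$ uniformly for $t \in (0, \varepsilon]$, $u \in \Ball$ and $i \in I$. Then $f_i(x) - f_{i_0}(x) \geq t\delta/2 > 0$, so $i \notin I(x)$, and combined with the first reduction this gives $I(x) \subseteq I(\bar x, p)$. The only point requiring care is the uniformity of the little-$o$ remainder in $u \in \Ball$, and that is immediate from the two-sided bound on $\|t(p+\varepsilon u)\|$; beyond this no calculus outside the propositions already established is needed.
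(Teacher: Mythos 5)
Your proof is correct, and it takes a genuinely different route from the paper's. The paper argues by contradiction: assuming the claim fails, it extracts sequences $\varepsilon_k\downarrow 0$, $t_k\in(0,\varepsilon_k]$, $u_k\in\Ball$ and (by finiteness of $I$) a single offending index $i_0\in I(x_k)\setminus I(\bar x,p)$, then lets the Hadamard definition of the directional derivative do all the work: since $p+\varepsilon_k u_k\to p$, the difference quotients of $f_{i_0}$ and of $f$ along $x_k$ both converge, forcing $f_{i_0}'(\bar x;p)=f'(\bar x;p)=\min_{i\in I(\bar x)}f_i'(\bar x;p)$ and hence $i_0\in I(\bar x,p)$, a contradiction. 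You instead give a direct, quantitative argument: you isolate the positive gap $\delta$ between the minimal value of $i\mapsto f_i'(\bar x;p)$ over $I(\bar x)$ and the next one (this is exactly the definition \eqref{eq:Ip} via \eqref{eq:FrechHadSupp}), and then beat it with two error terms — the Lipschitz perturbation of the sublinear derivative in the direction argument, which is $O(\varepsilon)$ uniformly in $u\in\Ball$, and the remainder from \eqref{eq:HadPropIndividual}, whose uniformity in $u$ you correctly reduce to the bound $\|t(p+\varepsilon u)\|\le (1+\varepsilon)t$ (only the upper bound is really needed, plus $\varepsilon<1$ to keep the argument nonzero). The monotonicity of all three conditions (a)--(c) in $\varepsilon$ removes any circularity in choosing a single $\varepsilon$. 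What each approach buys: the paper's sequential argument is shorter and sidesteps every uniformity issue, since the Hadamard limit absorbs the perturbation of the direction automatically; yours is longer but yields the explicit margin $f_i(x)-f(x)\ge t\delta/4$ for indices outside $I(\bar x,p)$, which is slightly more informative and in principle makes $\varepsilon(\bar x,p)$ computable from $\delta$, the Lipschitz moduli of the $f_i'(\bar x;\cdot)$, and the remainder moduli. (The only nit: with each of the three error terms bounded by $t\delta/4$ the final margin is $t\delta/4$ rather than $t\delta/2$, but positivity is all that is used.)
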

\begin{proof} Suppose that the claim is not true. Then there exist sequences $\{\varepsilon_k\}$, $\{t_k\}$ and $\{u_k\}$ such that $\varepsilon_k\downarrow 0$,  $t_k\in (0, \varepsilon_k]$, $u_k\in \Ball$ and for
$$
x_k =  \bar x + t_k (p + \varepsilon_k u_k)
$$
we have $I(x_k)\setminus I(\bar x, p) \neq \emptyset$. Without loss of generality assume that there is an $i_0\in I$ such that $i_0 \in I(x_k)\setminus I(\bar x, p) $. Observe that
$$
f_{i_0}(x_k)=f(x_k),
$$
hence, by the continuity of $f$, $i_0\in I(\bar x)$; moreover, observing that $p + \varepsilon_ku_k \to p$, we have
$$
f_{i_0}'(\bar x;p) =  \lim_{k\to \infty} \frac{f_{i_0}(x_k)-f_{i_0}(\bar x)}{t_k}
= \lim_{k\to \infty} \frac{f(x_k)-f(\bar x)}{t_k}  = f'(\bar x;p).
$$
We then have from Proposition~\ref{prop:ddmin}
$$
\max_{v\in \partial f_{i_0}(\bar x)}\langle v,p\rangle = f'_{i_0}(\bar x;p) = f'(\bar x;p) = \min_{i\in I(\bar x)} f_i'(\bar x;p) = \min_{i\in I(x)}\max_{v\in \partial f_i(\bar x)}\langle v,p\rangle,
$$
hence, $i_0\in I(\bar x,p)$, which contradicts our assumption.
\end{proof}

\section{Limiting subdifferential for pointwise minima}

Our results rely on the following technical lemma, whose proof is inspired by the proofs of fuzzy mean value theorems for Fr\'echet subdifferential (see \cite{Ioffe,Mord}). To show the existence of a nearby point with a desired subgradient, an auxiliary function is constructed which attains a local minimum at such point.

\begin{lemma}\label{lem:main-technical} Let $f:X \to \R$ be a pointwise minimum of finitely many functions with sublinear Hadamard directional derivatives at $\bar x\in X$, as in \eqref{eq:min-function}.
Then for every $p\in \Sph$ and
\begin{equation}\label{eq:main-tech-incl}
y\in \bigcap_{i\in I(\bar x, p)} \Argmax_{v\in \partial f_i(\bar x)}\langle v,p\rangle
\end{equation}
there exist sequences $\{x_k\}
$and $\{y_k\}$ such that
	$$
	x_k\underset{k\to \infty}{\longrightarrow} \bar x, \quad \frac{x_k - \bar x}{\|x_k - \bar x\|}  \underset{k\to \infty}{\longrightarrow} p, \quad y_k \in \partial f(x_k), \quad y_k \underset{k\to \infty}{\longrightarrow} y.
	$$
\end{lemma}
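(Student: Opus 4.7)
Fix $p\in \Sph$ and $y$ as in \eqref{eq:main-tech-incl}. The plan is to produce $x_k$ as the unconstrained minimiser of an auxiliary function on a shrinking ball centred at $\bar x+t_kp$, in the spirit of a fuzzy mean value argument, and then to read off $y_k$ from Proposition~\ref{prop:optimality}. By Proposition~\ref{prop:epsilon-cone} fix $\varepsilon>0$ so that $I(x)\subseteq I(\bar x,p)$ whenever $x=\bar x+tp+t\varepsilon u$ with $t\in(0,\varepsilon]$ and $u\in\Ball$. Pick $t_k\downarrow 0$ with $t_k<\varepsilon$, set $C_k=\bar x+t_kp+t_k\varepsilon\Ball$, and define the auxiliary function
$$
\phi_k(x)=f(x)-\langle y,x-\bar x\rangle+\frac{1}{t_k}\|x-\bar x-t_kp\|^2,
$$
which attains its minimum over the compact set $C_k$ at some $x_k$.

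I would then derive two matching estimates for $\phi_k$ on $C_k$. For the upper bound, I would first observe that $\langle y,p\rangle=\max_{v\in\partial f_i(\bar x)}\langle v,p\rangle=f_i'(\bar x;p)$ for every $i\in I(\bar x,p)$, so the definition \eqref{eq:Ip} combined with Proposition~\ref{prop:ddmin} gives $f'(\bar x;p)=\langle y,p\rangle$; the Hadamard expansion \eqref{eq:HadProp} at the centre then yields $\phi_k(x_k)\leq \phi_k(\bar x+t_kp)=f(\bar x)+\oh(t_k)$. For the lower bound, for any $x\in C_k$ I would pick $i^*\in I(x)\subseteq I(\bar x,p)$, note $f(x)=f_{i^*}(x)$ and $f_{i^*}(\bar x)=f(\bar x)$, and use \eqref{eq:HadPropIndividual} with $y\in\partial f_{i^*}(\bar x)$ (whence $f_{i^*}'(\bar x;x-\bar x)\geq\langle y,x-\bar x\rangle$) to get
$$
\phi_k(x)\geq f(\bar x)-\alpha_k\|x-\bar x\|+\frac{1}{t_k}\|x-\bar x-t_kp\|^2,
$$
where $\alpha_k:=\sup_{i\in I(\bar x,p),\,\|s\|\leq t_k(1+\varepsilon)}|\oh_i(s)|/\|s\|\to 0$ because $I$ is finite.

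Comparing the two estimates at $x=x_k$ would give $\|x_k-\bar x-t_kp\|^2\leq t_k\bigl(\oh(t_k)+\alpha_kt_k(1+\varepsilon)\bigr)=\oh(t_k^2)$, i.e.\ $x_k=\bar x+t_kp+\oh(t_k)$. This immediately yields $x_k\to\bar x$ and $(x_k-\bar x)/\|x_k-\bar x\|\to p$, and ensures $x_k$ lies in the interior of $C_k$ for all large $k$, so $x_k$ is an unconstrained local minimum of $\phi_k$. Applying Proposition~\ref{prop:optimality} with $f_2=f$ and $f_1(x)=-\langle y,x-\bar x\rangle+\frac{1}{t_k}\|x-\bar x-t_kp\|^2$ would then deliver
$$
y_k:=y-\frac{2}{t_k}(x_k-\bar x-t_kp)\in\partial f(x_k),\qquad \|y_k-y\|=\frac{2}{t_k}\|x_k-\bar x-t_kp\|\to 0.
$$

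The main obstacle is calibrating the penalty coefficient, here chosen as $1/t_k$. It must be large enough to dominate both the $\oh(t_k)$ slack in the upper bound at the centre and the $\alpha_k t_k$ slack in the lower bound, forcing $x_k$ into the interior of $C_k$ (so that the subdifferential rule applies without constraint qualifications); yet it must be small enough that the residual $\tfrac{2}{t_k}(x_k-\bar x-t_kp)$ still vanishes in the limit. The choice $1/t_k$ hits the balance precisely, because the ensuing bound $\|x_k-\bar x-t_kp\|=\oh(t_k)$ is exactly one order sharper than $t_k$, which is what is needed to make both $x_k$ approach $\bar x$ along direction $p$ and $y_k$ converge to $y$.
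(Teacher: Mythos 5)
Your proof is correct and follows essentially the same route as the paper: the same penalised auxiliary function $f(x)-\langle y,x-\bar x\rangle+\frac{1}{t_k}\|x-\bar x-t_kp\|^2$ minimised over the ball $\bar x+t_kp+t_k\varepsilon\Ball$, the same use of Proposition~\ref{prop:epsilon-cone} to restrict the active indices, and the same application of Proposition~\ref{prop:optimality} at an interior minimiser. The only difference is one of execution: you extract the quantitative bound $\|x_k-\bar x-t_kp\|=o(t_k)$ directly from matched upper and lower estimates, which gives interiority and $y_k\to y$ with a single fixed $\varepsilon$, whereas the paper proves interiority by contradiction for each fixed $\varepsilon$ and then diagonalises over $\varepsilon_k\downarrow 0$ to force $y_k=y-2\varepsilon_k u_k\to y$.
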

\begin{proof} Fix  $p\in \Sph$ and $y$ such that
	$$
y\in \bigcap_{i\in I(\bar x, p)} \Argmax_{v\in \partial f_i(\bar x)}\langle v,p\rangle.
	$$
Observe that by the relation \eqref{eq:FrechHadSupp} and by the positive homogeneity of the directional derivative \eqref{eq:HadDDHomogeneous} we have for all $i\in I(\bar x,p)$
\begin{equation}\label{eq:ddatp}
	f'_i(\bar x;\lambda p) = \lambda f'_i(\bar x;p) = \lambda \max_{v\in \partial f_i(\bar x)} \langle v, p\rangle = \langle y, \lambda p\rangle \quad \forall\,  \lambda>0.
\end{equation}

For any $\lambda>0$ define the function $\varphi_\lambda: X\to \R$ as follows
	$$
	\varphi_{\lambda}(x)  = f(x)- f(\bar x)-\langle y, x-\bar x\rangle + \frac{1}{\lambda}\|x-(\bar x +\lambda p )\|^2.
	$$
	We will show that for sufficiently small $\lambda$ a minimum of the function $\varphi_\lambda$ on the ball $\bar x +\lambda p +\lambda \varepsilon \Ball$ is attained at an interior point (here $\varepsilon \in (0,\min(\varepsilon(\bar x,p),1))$, where $\varepsilon(\bar x,p)$ comes from Proposition~\ref{prop:epsilon-cone}). Note here that since $X$ is an open set, there exists $r\in (0,1)$ such that $\bar x + r\Ball \subset X$. If $\lambda$ is smaller than $r/2$, then $\bar x +\lambda p +\lambda \varepsilon \Ball \subset \bar x + r\Ball \subset X$. We will assume that our $\lambda$ is always chosen small enough to satisfy this condition, and also that $\lambda<\varepsilon(\bar x, p)$ (see Proposition~\ref{prop:epsilon-cone}).
	
Observe that the function $\varphi_\lambda$ is continuous, and hence it attains its minimum on the ball $\bar x +\lambda p +\lambda \varepsilon \Ball$. Assume that contrary to what we want to prove,
there exist $\{\lambda_k\}$ and $\{u_k\}$ such that $\lambda_k \downarrow 0$, $u_k \in \Sph$ and 
	$$
	\bar x + \lambda_k p +\lambda_k \varepsilon u_k \in \Argmin_{x\in\bar x + \lambda_k p +\lambda_k \varepsilon \Ball} \varphi_{\lambda_k}(x).
	$$
	We therefore have
	$$
	\varphi_{\lambda_k} (\bar x + \lambda_k p + \lambda _k \varepsilon u_k )\leq \varphi_{\lambda_k} (\bar x + \lambda_k p),
	$$
	or explicitly
\begin{align}\label{eq:ineq-many}
 & \min_{i\in I}f_i(\bar x + \lambda_k p + \lambda _k \varepsilon u_k )- f(\bar x)- \langle y,  \lambda_k p + \lambda _k \varepsilon u_k \rangle + \frac{1}{\lambda_k}\| \varepsilon \lambda_k u_k \|^2 \notag\\
& \qquad 	\leq \min_{i\in I}f_i(\bar x + \lambda_k p)- f(\bar x)- \langle y, \lambda_k p\rangle .
\end{align}
By our choice of $\varepsilon$ and $\lambda$, Proposition~\ref{prop:epsilon-cone} yields that
$$
I(\bar x + \lambda_k p + \lambda _k \varepsilon u_k ) \subseteq I(\bar x,p) \quad \forall k \in \N.
$$
Without loss of generality, due to the finiteness of $I(\bar x, p)$, we can assume that the index set is constant, i.e.
$$
I(\bar x + \lambda_k p + \lambda _k \varepsilon u_k ) = \tilde I .
$$
which together with \eqref{eq:ineq-many} yields
\begin{align}\label{eq:ineq-many-individual}
	& f_i(\bar x + \lambda_k p + \lambda _k \varepsilon u_k )- f_i(\bar x)- \langle y,  \lambda_k p + \lambda _k \varepsilon u_k \rangle + \frac{1}{\lambda_k}\| \varepsilon \lambda_k u_k \|^2 \notag\\
	& \qquad 	\leq f_i(\bar x + \lambda_k p)- f_i(\bar x)- \langle y, \lambda_k p\rangle \quad \forall i \in \tilde I.
\end{align}
Notice that by our choice of $y$ we have from the relations \eqref{eq:FrechHadSupp} and \eqref{eq:ddatp} for all $i\in \tilde I \subset I(\bar x)$
\begin{equation}\label{eq:pf09}
\langle y,  \lambda_k p + \lambda _k \varepsilon u_k \rangle \leq \max_{v\in \partial f_i(\bar x)} \langle v, \lambda_k p + \lambda _k \varepsilon u_k \rangle \leq f'_i(\bar x; \lambda_k p + \lambda _k \varepsilon u_k)
\end{equation}
and
\begin{equation}\label{eq:pf10}
\langle y,  \lambda_k p \rangle =  \max_{v\in \partial f_i(\bar x)} \langle v, \lambda_k p \rangle = f'_i(\bar x; \lambda_k p)
\end{equation}
Noticing that $\|u_k\|=1$, substituting  \eqref{eq:pf09}  and \eqref{eq:pf10} into \eqref{eq:ineq-many-individual}, dividing the whole expression by $\lambda_k$, we obtain for every $i\in \tilde I$
	\begin{align*}
		\varepsilon^2 & \leq
		\frac{f_i(\bar x + \lambda_k p)- f_i(\bar x)- f_i'(\bar x; \lambda_k p)}{\lambda_k}
		-\frac{f_i(\bar x + \lambda_k p + \lambda _k \varepsilon u_k )- f_i(\bar x)- f_i'(\bar x; \lambda_k p +\lambda _k \varepsilon u_k)}{\lambda_k}\\
		& =  \frac{\oh_i(\lambda_k p )}{\lambda_k} -\frac{\oh_i(\lambda_k p + \lambda _k \varepsilon u_k)}{\lambda_k}  = \frac{\oh_i(\lambda_k p )}{\|\lambda_k p\|}- \frac{\oh_i(\lambda_k p + \lambda _k \varepsilon u_k)}{\|\lambda_k p + \lambda _k \varepsilon u_k\|}\|p + \varepsilon u_k\|
	\end{align*}
	where $\oh_i (\cdot)$'s are as in  \eqref{eq:HadPropIndividual}. It is not difficult to see that the right hand side goes to zero as $k\to \infty$, and hence $\varepsilon^2=0$, which contradicts our choice of a fixed positive $\varepsilon$.
	
	We have shown that our assumption is wrong, and given a fixed $\varepsilon >0$ for sufficiently small $\lambda(\varepsilon)$ the function $\varphi_\lambda$ has a local minimum in the interior of the ball $\bar x + \lambda p + \varepsilon \lambda \Ball$ for all $\lambda \in (0,\lambda(\varepsilon))$; in other words, it attains an unconstrained local minimum at this point.
	
	Let $\{\varepsilon_k\}$ be such that $\varepsilon_k \downarrow 0$, and choose
	$$
	\lambda_k = \min\{\varepsilon_k, \lambda(\varepsilon_k)\} \quad \forall \, k \in \N.
	$$
	For each $k\in \N$ there exists a point $u_k$ in the interior of $\Ball$ such that $\bar x + \lambda_k p + \varepsilon_k \lambda_ k u_k$ is a minimum of the function $\varphi_\lambda$ on the ball $\bar x + \lambda_k p + \varepsilon_k \lambda_k \Ball$.
From the optimality condition in Proposition~\ref{prop:optimality} we have
	$$
	y_k: = y- 2 \varepsilon_k u_k \in \partial f (\bar x + \lambda_k p + \lambda_k \varepsilon_k u_k).
	$$
	Observe that $y_k \to y$ and for $x_k : = \bar x + \lambda_k p + \lambda_k \varepsilon_k u_k$ we have
	$x_k \to \bar x$ and
	$$
	\frac{x_k - \bar x}{\|x_k - \bar x\|} =\frac{p + \varepsilon_k u_k}{\|p + \varepsilon_k u_k\|}    \underset{k\to \infty}{\longrightarrow} p,
	$$
	so we are done.
\end{proof}

We use Lemma~\ref{lem:main-technical} to obtain an inclusion relation for the outer limits of subdifferentials.

\begin{theorem}\label{con:gen3.2} Let $f:X 	\to \R$ be as in \eqref{eq:min-function}. Then
\begin{equation}\label{eq:3.2main}
	\bigcup_{p\in \Sph \atop f'(\bar x; p)>0} \bigcap_{i\in I(x,p)} \Argmax_{v\in \partial f_i(\bar x)}\langle v, p\rangle \subseteq \Limsup_{x\to \bar x\atop f(x) >f(\bar x)} \partial f(x),
\end{equation}
where
$I(x,p)$ is the index set as defined in \eqref{eq:Ip}.	
\end{theorem}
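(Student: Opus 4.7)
The plan is to reduce the theorem to a direct application of Lemma~\ref{lem:main-technical}, combined with the observation that the strict-positivity hypothesis $f'(\bar x; p) > 0$ automatically places the sequence produced by the lemma into the strict super-level set $\{x : f(x) > f(\bar x)\}$. I would first fix an arbitrary $y$ in the left-hand side of \eqref{eq:3.2main}, so that there exists a direction $p \in \Sph$ with $f'(\bar x; p) > 0$ and
$$
y \in \bigcap_{i\in I(\bar x, p)} \Argmax_{v\in \partial f_i(\bar x)} \langle v, p\rangle.
$$

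Next I would invoke Lemma~\ref{lem:main-technical} for this pair $(p, y)$ to produce sequences $\{x_k\}$ and $\{y_k\}$ with $x_k \to \bar x$, $(x_k - \bar x)/\|x_k - \bar x\| \to p$, $y_k \in \partial f(x_k)$, and $y_k \to y$. All that would then remain is to verify that $f(x_k) > f(\bar x)$ for all sufficiently large $k$, which is precisely what is needed to conclude $y \in \Limsup_{x \to \bar x,\, f(x) > f(\bar x)} \partial f(x)$.

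To establish this last point I would note that $f$ is itself Hadamard directionally differentiable at $\bar x$ (a fact that falls out of the proof of Proposition~\ref{prop:ddmin}, whose limit is genuinely two-sided), and set $t_k := \|x_k - \bar x\|$ and $p_k := (x_k - \bar x)/t_k$, so that $t_k \downarrow 0$ and $p_k \to p$. The Hadamard property then gives
$$
\frac{f(x_k) - f(\bar x)}{t_k} \;=\; \frac{f(\bar x + t_k p_k) - f(\bar x)}{t_k} \;\underset{k\to\infty}{\longrightarrow}\; f'(\bar x; p) \;>\; 0,
$$
which forces $f(x_k) > f(\bar x)$ for all large $k$ and finishes the argument.

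In my view there is no serious obstacle in this theorem itself: all the heavy lifting, namely the construction of the penalty function $\varphi_\lambda$, the proof that its minimiser over the thin cone $\bar x + \lambda p + \lambda \varepsilon \Ball$ eventually lies in the interior, and the extraction of a subgradient via Proposition~\ref{prop:optimality}, has already been packaged inside Lemma~\ref{lem:main-technical}. The theorem is therefore a clean corollary of that lemma once the direction $p$ is matched to the strict growth condition $f'(\bar x; p) > 0$.
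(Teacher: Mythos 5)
Your proposal is correct and follows essentially the same route as the paper: fix $y$ and its direction $p$, invoke Lemma~\ref{lem:main-technical}, and then use the Hadamard limit $\frac{f(x_k)-f(\bar x)}{\|x_k-\bar x\|}\to f'(\bar x;p)>0$ to get $f(x_k)>f(\bar x)$ for large $k$. The only cosmetic difference is that the paper phrases this last step as a proof by contradiction, whereas you argue it directly; the underlying limit computation is identical.
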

\begin{proof} For $\bar x\in X$ choose any direction $p\in \Sph$ such that $f'(\bar x;p)>0$. We will show that for
	$$
	y \in  \bigcap_{i\in I(\bar x,p)} \Argmax_{v\in \partial f_i(\bar x)}\langle v, p\rangle
	$$
	we have
	$$
	y \in \Limsup_{x\to \bar x\atop f(x) >f(\bar x)}  \partial f(x).
	$$
	By Lemma~\ref{lem:main-technical} there exist sequences $\{x_k\}$ and $\{y_k\}$ such that $y_k \in \partial f(x_k)$, $y_k \to y$, $x_k\to \bar x $ and
	$$
	p_k := \frac{x_k - \bar x}{\|x_k - \bar x\|}\to p.
	$$
	It remains to show that for sufficiently large $k$ we have $f(x_k)>f(\bar x)$. Assume this is not so. Then without loss of generality, $f(x_k)\leq f(\bar x)$ for all $k\in \N$, and
	$$
	f'(\bar x;p) = \lim_{k\to \infty}\frac{f(\bar x+ \|x_k-\bar x\|p_k)- f(\bar x)}{\|x_k - \bar x \|}= \lim_{k\to \infty}\frac{f(x_k)- f(\bar x)}{\|x_k - \bar x \|} \leq 0,
	$$
	which contradicts our choice of $p$.
\end{proof}

We have the following corollary that follows directly from Theorem~\ref{con:gen3.2} and the fact that the limit on the right-hand side of \eqref{eq:3.2main} is a closed set.

\begin{corollary}\label{cor:genth3.2} Let $f:X 	\to \R$ be as in \eqref{eq:min-function}. Then
\begin{equation}\label{eq:3.2cormain}
	\cl \left(\bigcup_{p\in \Sph \atop f'(\bar x; p)>0} \bigcap_{i\in I(x,p)} \Argmax_{v\in \partial f_i(\bar x)}\langle v, p\rangle \right)\subseteq \Limsup_{x\to \bar x\atop f(x) >f(\bar x)} \partial f(x),
\end{equation}
where
$I(x,p)$ is the index set as defined earlier.	
\end{corollary}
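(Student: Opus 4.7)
The plan is to deduce this corollary as an essentially immediate consequence of Theorem~\ref{con:gen3.2}, using only the fact that the Painlev\'e--Kuratowski outer limit of any family of sets is closed. Accordingly, there is no new geometric content to extract; the entire work lies in the set-theoretic move of closing up the left-hand side.

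Concretely, I would proceed in three short steps. First, I would invoke Theorem~\ref{con:gen3.2} to record the inclusion
\[
A := \bigcup_{p\in \Sph,\, f'(\bar x; p)>0}\; \bigcap_{i\in I(\bar x,p)} \Argmax_{v\in \partial f_i(\bar x)}\langle v, p\rangle \;\subseteq\; B := \Limsup_{x\to \bar x,\, f(x) >f(\bar x)} \partial f(x).
\]
Second, I would verify that $B$ is closed. This is the one point that needs to be justified, though it is standard: if $\{y_n\}\subset B$ converges to some $y$, then for each $n$ one can select $x_n^{(k)}\to\bar x$ with $f(x_n^{(k)})>f(\bar x)$ and $v_n^{(k)}\in\partial f(x_n^{(k)})$ with $v_n^{(k)}\to y_n$ as $k\to\infty$; a standard diagonal extraction then produces sequences $\tilde x_n\to\bar x$ with $f(\tilde x_n)>f(\bar x)$ and $\tilde v_n\in\partial f(\tilde x_n)$ with $\tilde v_n\to y$, so that $y\in B$. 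Third, since $A\subseteq B$ and $B$ is closed, the closure operator gives $\cl A\subseteq \cl B = B$, which is exactly \eqref{eq:3.2cormain}.

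There is essentially no obstacle: the only ingredient beyond Theorem~\ref{con:gen3.2} is the topological closedness of outer limits of set-valued mappings, which follows from a one-line diagonal argument. I would keep the written proof to a single sentence referencing Theorem~\ref{con:gen3.2} and the closedness of the $\operatorname{Lim\,sup}$, as the authors suggest.
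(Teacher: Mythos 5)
Your proposal is correct and follows exactly the route the paper takes: the authors state that the corollary ``follows directly from Theorem~\ref{con:gen3.2} and the fact that the limit on the right-hand side of \eqref{eq:3.2main} is a closed set,'' which is precisely your three-step argument. Your diagonal-extraction justification of the closedness of the outer limit is a standard detail the paper leaves implicit, and it is fine.
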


We would like to point out that the result of the computation of the expression on the left hand side of \eqref{eq:3.2main} and \eqref{eq:3.2cormain} depends on the position of zero with respect to the subdifferential. The following example taken from \cite[Example~3.2]{RoshchinaMord} illustrates this observation.

\begin{example}  Consider the two functions
$$
 f_1(x,y) = \sqrt{x^2+ y^2} +\frac{1}{2}x,\quad  f_2(x,y) = \sqrt{x^2+ y^2} -\frac{1}{2}x.
$$
Notice that $f = \min \{f_1, f_2\}$ is nonnegative everywhere, and $f(x,y) = 0$ iff $(x,y)=0_2$ (see the plots in Fig.~\ref{Fig:disks-plots}).
\begin{figure}[ht]
{\centering \includegraphics[scale=1]{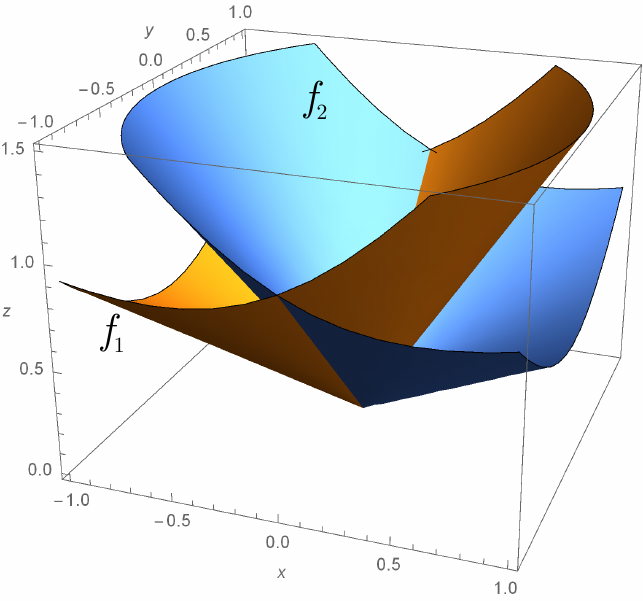} \quad
\includegraphics[scale=1]{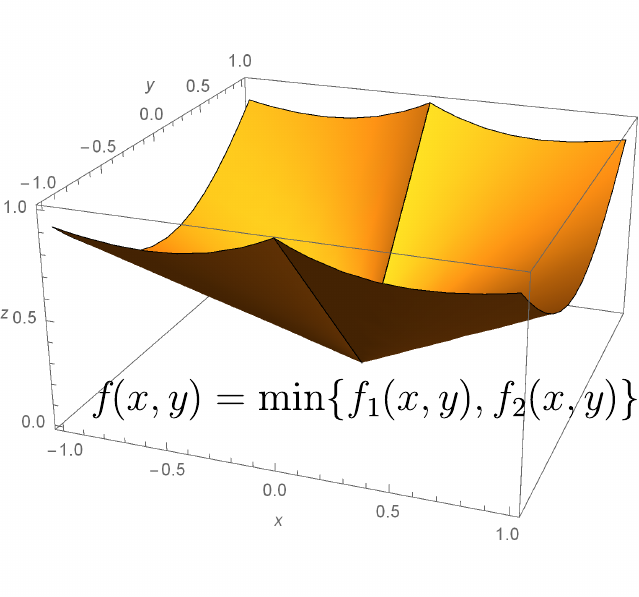} \\}
\caption{On the left: the functions $f_1$ and $f_2$; on the right: $f(x,y) = \min \{f_1(x,y), f_2(x,y)\}$.}
\label{Fig:disks-plots}
\end{figure}
It is not difficult to observe that the subdifferentials of $f_1$ and $f_2$ at zero are unit disks centred at $(\frac{1}{2}, 0)$ and $(-\frac{1}{2},0)$ respectively (see Example~3.2 in \cite{FrechetCalculus} for detailed explanation). The left-hand side in \eqref{eq:3.2main} for the function $f$ at $0_2$ is the union of two (open) semi-circles, see Fig.~\ref{Fig:disks-plots-sub}. Observe that the closure of this set coincides with the outer limit on the right hand side of \eqref{eq:3.2main}, so in fact \eqref{eq:3.2cormain} is an exact characterisation in this case.
\begin{figure}[ht]
{\centering \includegraphics[height=180pt]{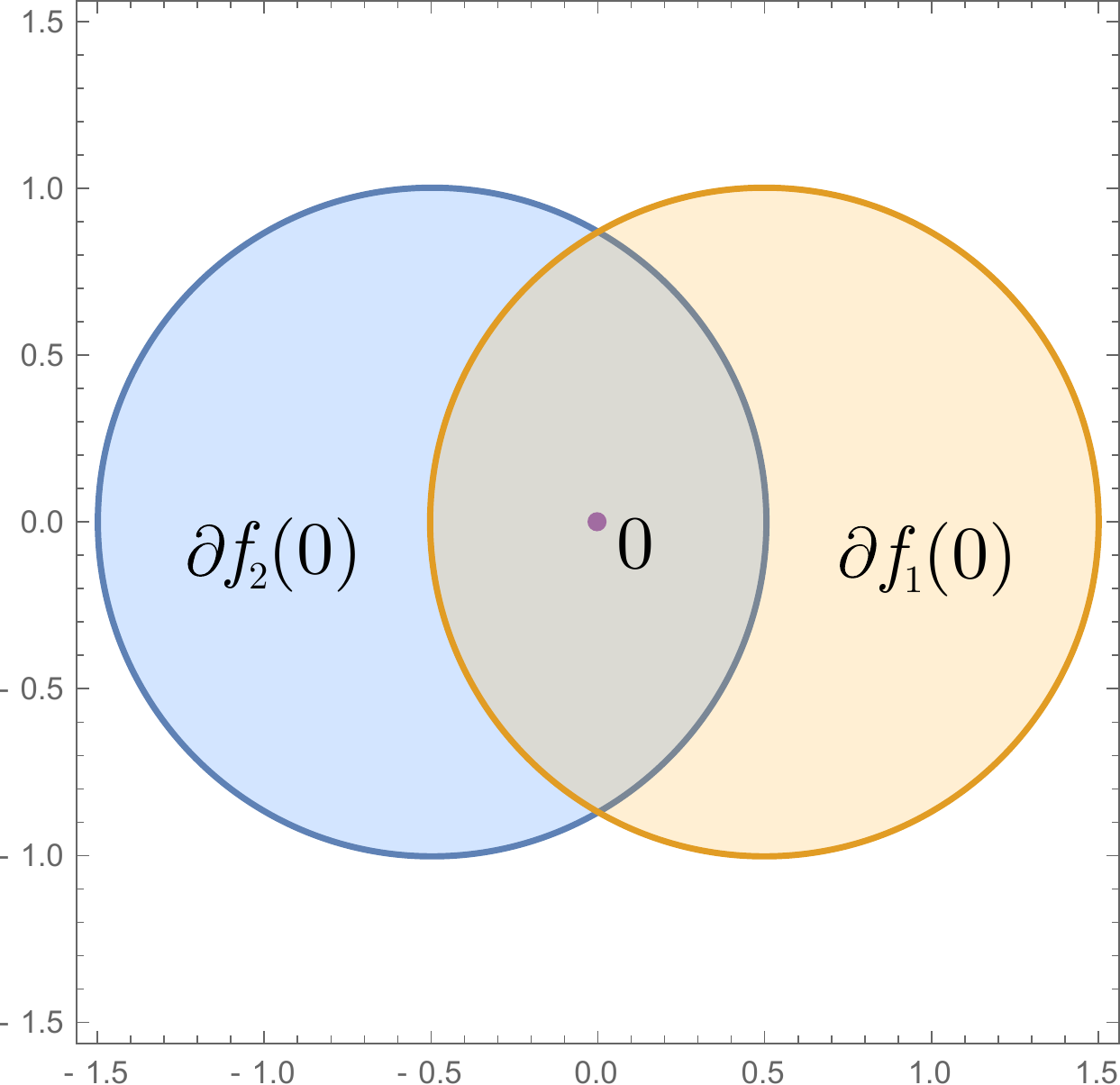}\qquad
\includegraphics[height=180pt]{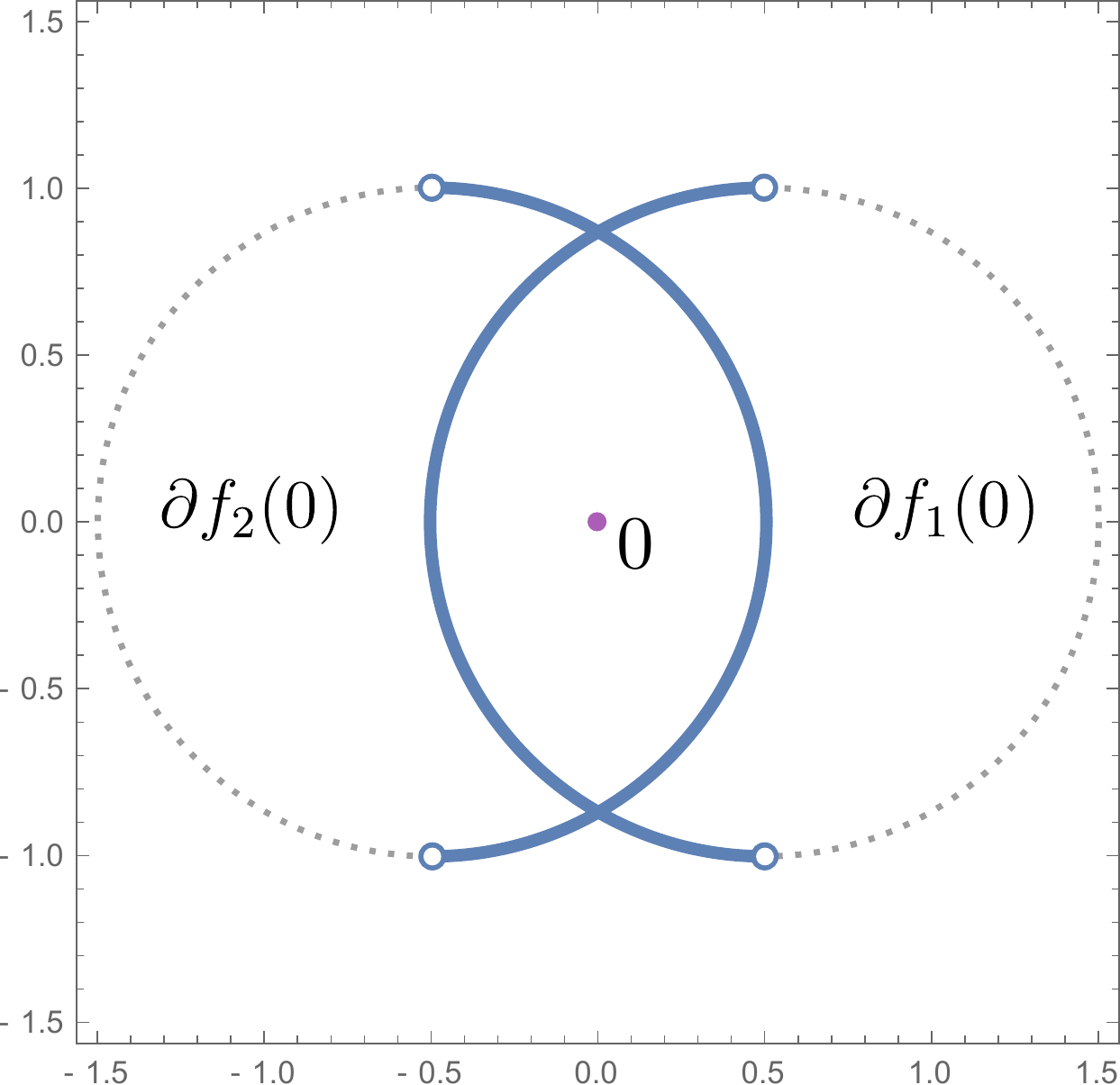}\\}
\caption{On the left: the Fr\'echet subdifferentials at zero, $\partial f_1 (0)$ and $\partial f_2(0)$; on the right: the left hand side union in \eqref{eq:3.2main} is shown in bold solid lines.}
\label{Fig:disks-plots-sub}
\end{figure}

We next modify this example by translating the subdifferentials and obtaining a different set on the left hand side of \eqref{eq:3.2main}.

Consider the modified functions
$$
 \tilde f_1(x,y) = \sqrt{x^2+ y^2} +\frac{3}{2}x,\quad  \tilde f_2(x,y) = \sqrt{x^2+ y^2} +\frac{1}{2}x.
$$
The minimum function $\tilde f(x,y) = \min \{\tilde f_1(x,y) , \tilde f_2(x,y)\}$ is no longer nonnegative (see Fig.~\ref{Fig:disks-mod-plots}.
\begin{figure}[ht]
{\centering \includegraphics[height=120pt]{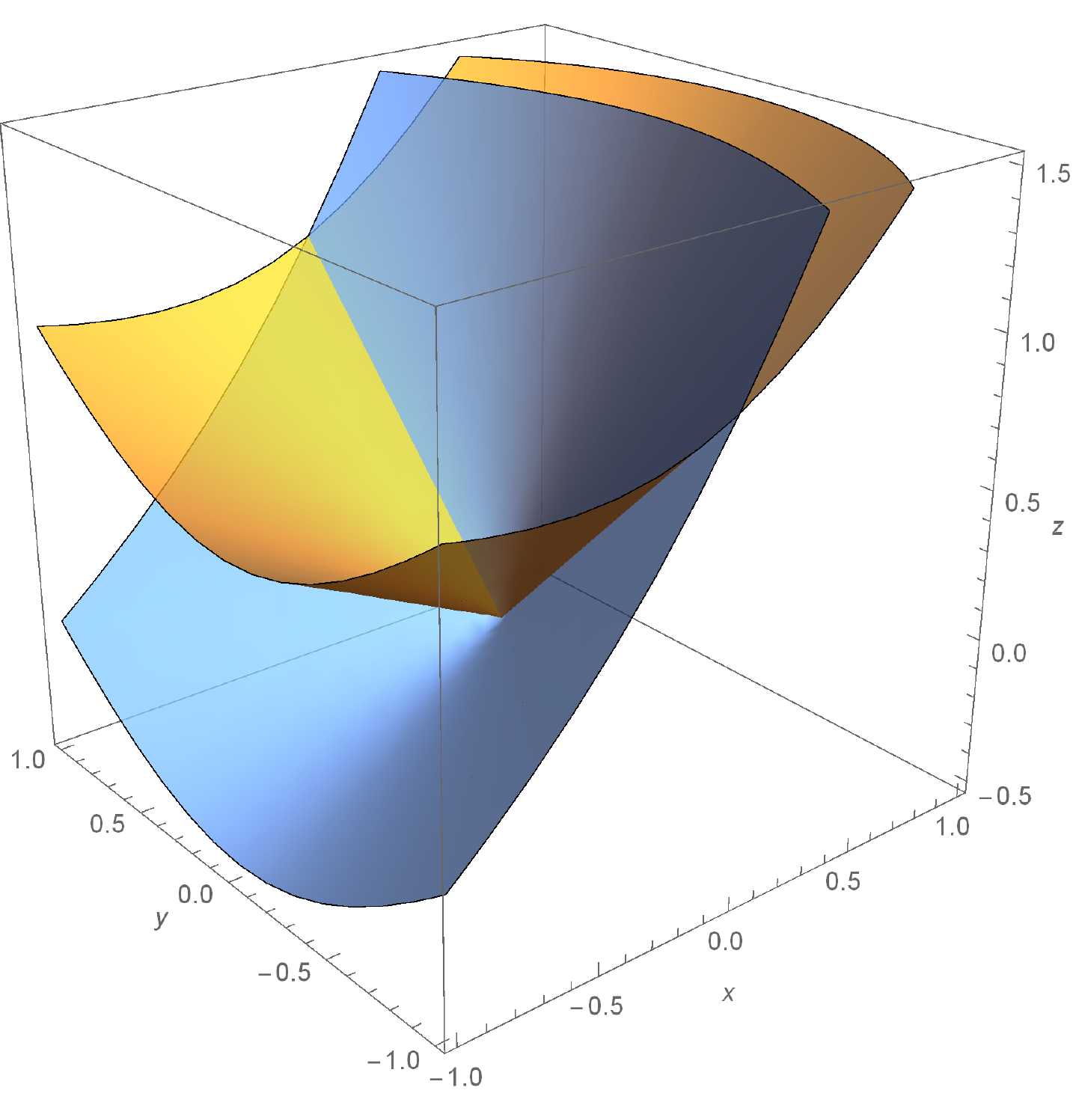} \quad
\includegraphics[height=120pt]{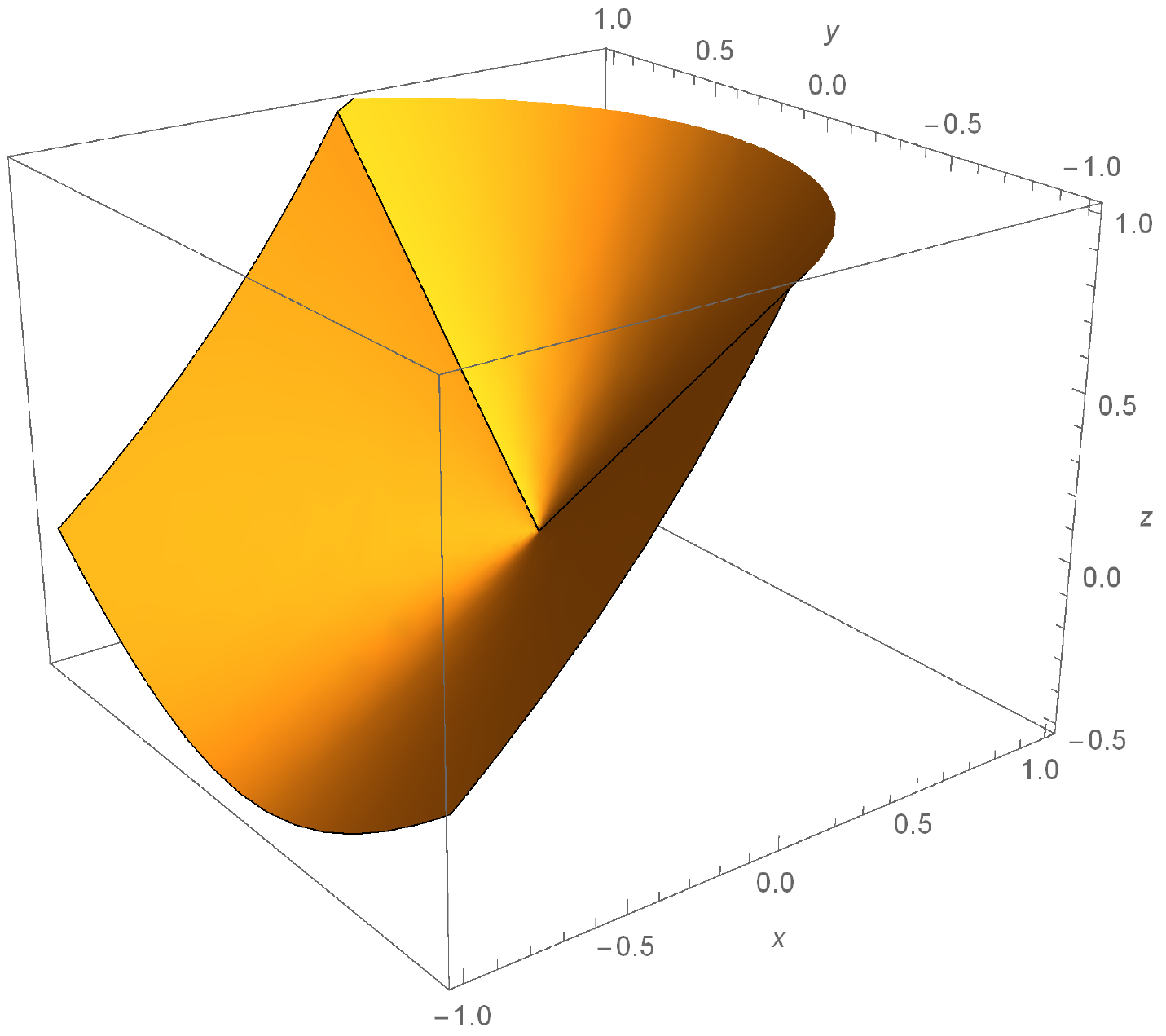}\quad
\includegraphics[height=120pt]{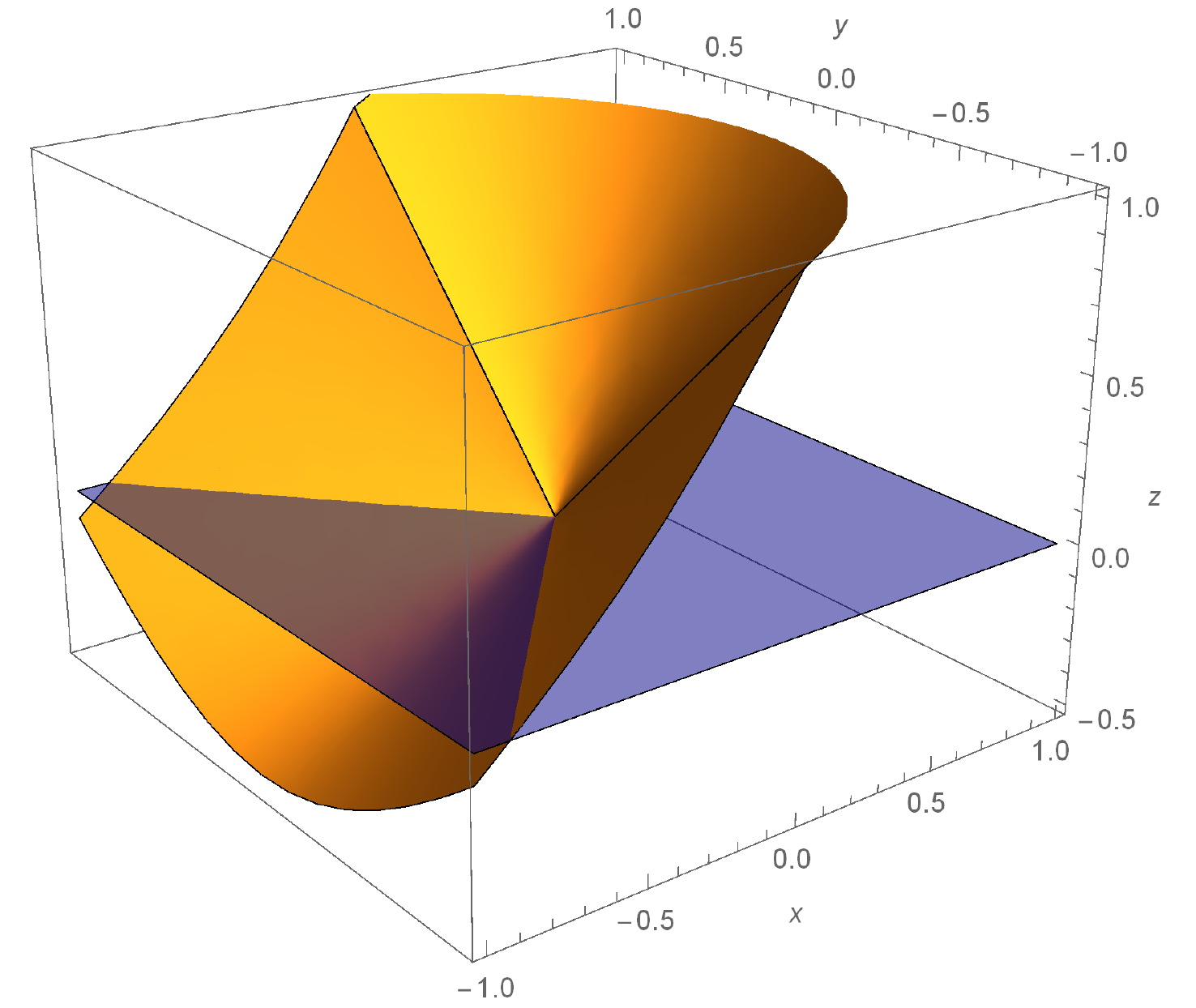} \\}
\caption{On the left: the functions $\tilde f_1$ and $\tilde f_2$; in the middle: $\tilde f(x,y) = \min \{\tilde f_1(x,y), \tilde f_2(x,y)\}$; on the right: the plot of $\tilde f$ is shown together with $\{z=0\}$.}
\label{Fig:disks-mod-plots}
\end{figure}
Similar to the previous example, the subdifferentials of $f_1$ and $f_2$ at zero are unit disks centred at $(\frac{3}{2}, 0)$ and $(\frac{1}{2},0)$ respectively. The left-hand side in \eqref{eq:3.2main} is the union of one semi-circle and two smaller segments of the other circle, see Fig.~\ref{Fig:disks-mod-plots-sub}.
\begin{figure}[ht]
{\centering \includegraphics[height=180pt]{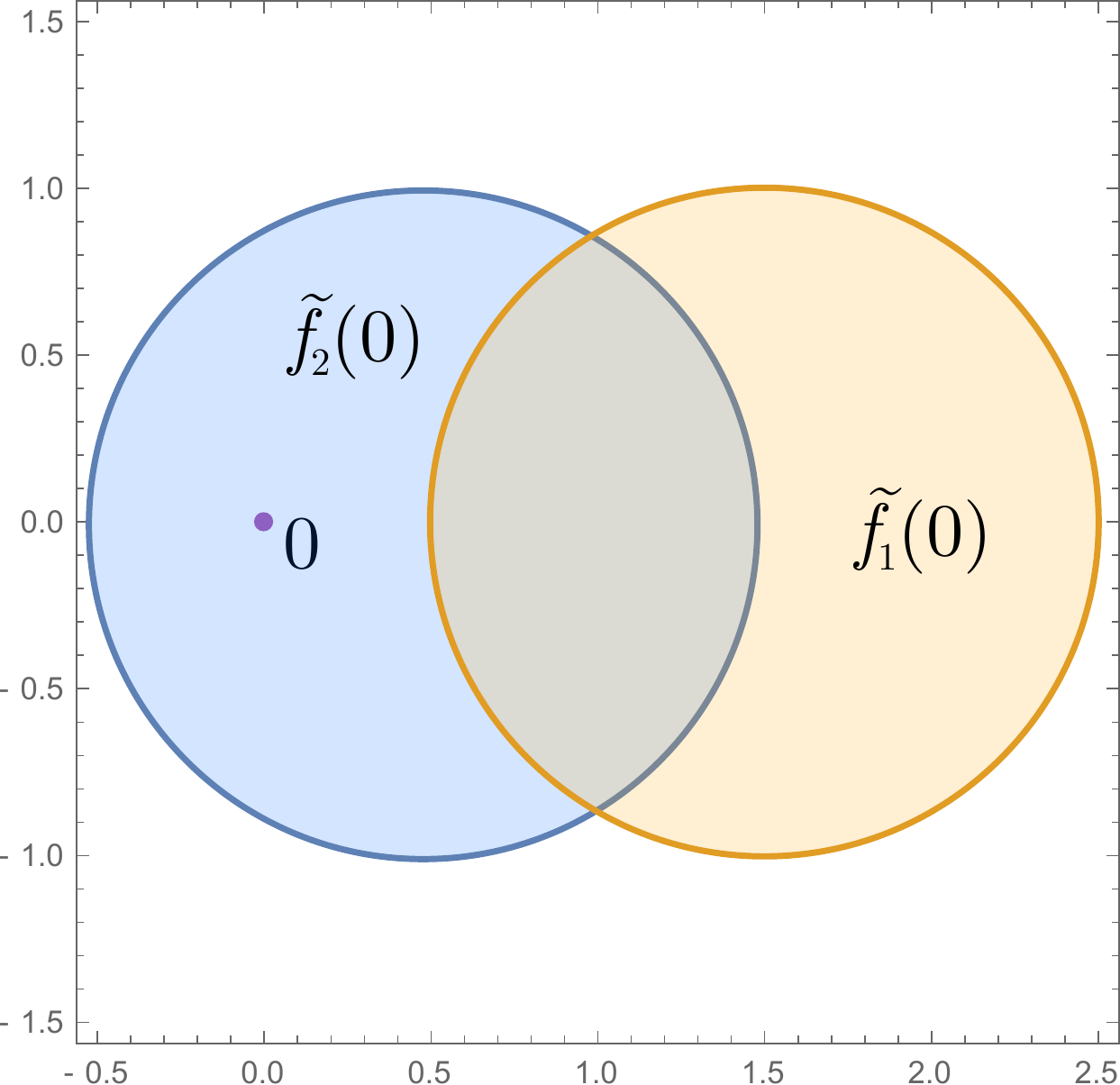}\qquad
\includegraphics[height=180pt]{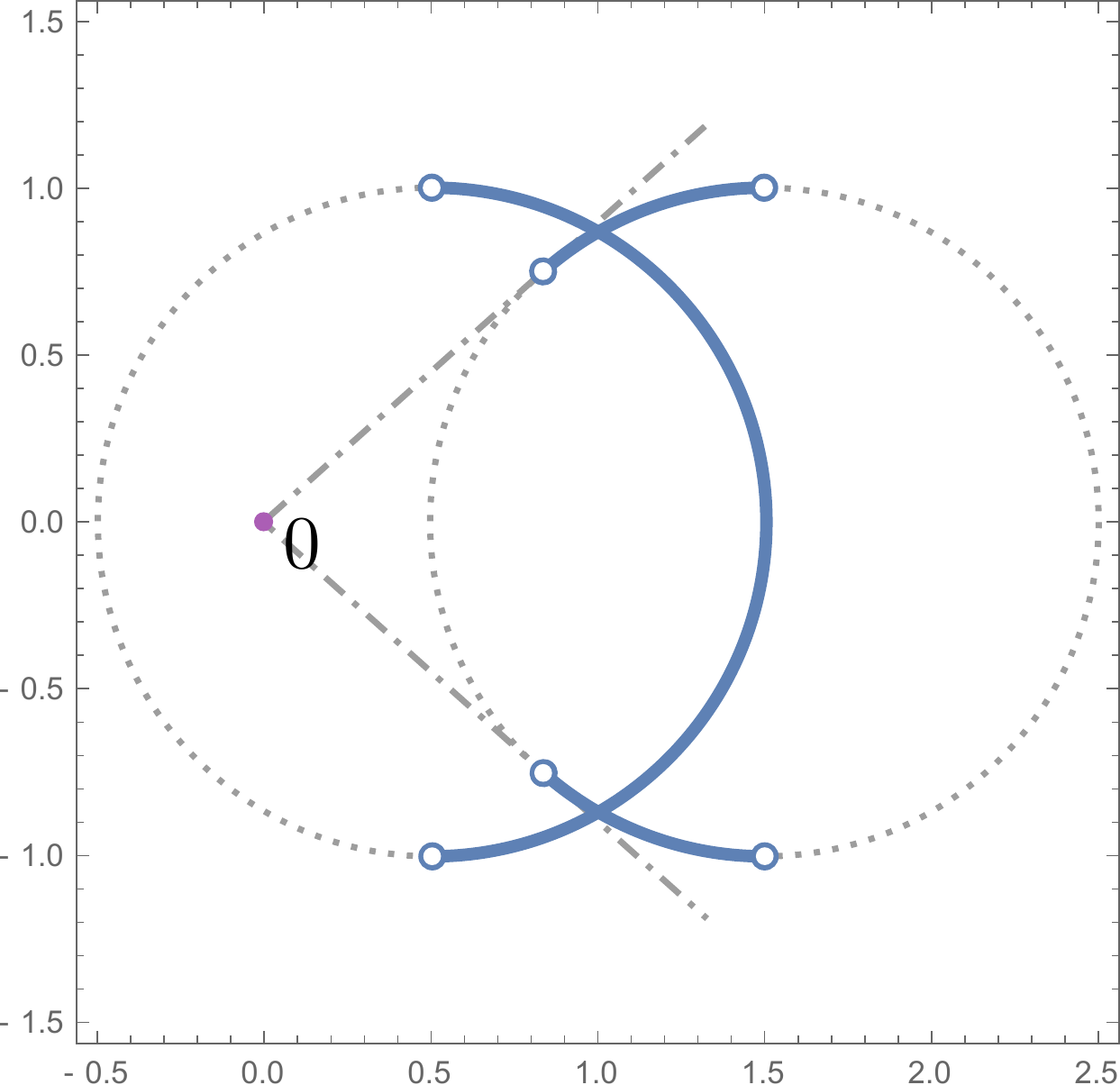}\\}
\caption{On the left: the Fr\'echet subdifferentials at zero, $\partial f_1 (0)$ and $\partial f_2(0)$; on the right: the left hand side union in \eqref{eq:3.2main} is shown in bold solid lines.}
\label{Fig:disks-mod-plots-sub}
\end{figure}
\end{example}

We have the following useful special case of Theorem~\ref{con:gen3.2}.

\begin{corollary}\label{cor:gen3.2} Let $f:X \to \R$ be Hadamard directionally differentiable at $\bar x\in X$, assume that the directional derivative $f'(\bar x;\cdot)$ is a sublinear function, then
\begin{equation}\label{eq:corspecial}
	\cl \left(\bigcup_{p\in \Sph \atop f'(\bar x; p)>0} \Argmax_{v\in \partial f(\bar x)}\langle v, p\rangle\right) \subseteq \Limsup_{x\to \bar x\atop f(x) >f(\bar x)} \partial f(x).
\end{equation}
\end{corollary}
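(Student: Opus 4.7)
The plan is to derive this corollary as an immediate specialisation of Corollary~\ref{cor:genth3.2}. The statement of Corollary~\ref{cor:gen3.2} deals with a single function $f$ that is Hadamard directionally differentiable with sublinear directional derivative at $\bar x$, whereas Corollary~\ref{cor:genth3.2} applies to pointwise minima of finitely many such functions. The observation is that any single function satisfying the hypotheses of Corollary~\ref{cor:gen3.2} can be trivially viewed as a pointwise minimum over an index set of size one.

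Concretely, I would set $I = \{1\}$ and $f_1 := f$, so that \eqref{eq:min-function} holds tautologically. The sublinearity and Hadamard differentiability assumptions on $f_1 = f$ are exactly those required in Corollary~\ref{cor:genth3.2}. With this setup, for every $x\in X$ we have $I(x) = \{1\}$, and consequently, for every $p \in \Sph$,
\begin{equation*}
I(\bar x, p) = \{1\},
\end{equation*}
as the minimum in the defining relation \eqref{eq:Ip} is taken over a singleton.

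Substituting into the left-hand side of \eqref{eq:3.2cormain}, the intersection over $I(\bar x, p)$ collapses to a single set, yielding
\begin{equation*}
\bigcap_{i\in I(\bar x,p)} \Argmax_{v\in \partial f_i(\bar x)}\langle v, p\rangle = \Argmax_{v\in \partial f(\bar x)}\langle v, p\rangle.
\end{equation*}
Likewise, the directional derivative of the pointwise minimum over a single function is just $f'(\bar x;p)$ itself, so the condition $f'(\bar x;p)>0$ indexing the union on the left-hand side is unchanged. Corollary~\ref{cor:genth3.2} therefore gives precisely the desired inclusion \eqref{eq:corspecial}.

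There is no real obstacle here: the proof is a one-line application of Corollary~\ref{cor:genth3.2}. The only thing worth double-checking is that the hypothesis ``Hadamard directionally differentiable with sublinear derivative'' in Corollary~\ref{cor:gen3.2} matches exactly the hypothesis imposed on the $f_i$ in \eqref{eq:min-function} and used throughout Lemma~\ref{lem:main-technical} and Theorem~\ref{con:gen3.2}, which it does by construction.
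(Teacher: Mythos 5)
Your proposal is correct and matches the paper's intent exactly: the paper presents Corollary~\ref{cor:gen3.2} as a direct special case of Theorem~\ref{con:gen3.2} (via Corollary~\ref{cor:genth3.2} for the closure), with no separate proof given, and your specialisation to $I=\{1\}$, $f_1=f$, so that $I(\bar x)=I(\bar x,p)=\{1\}$ and the intersection collapses to a single $\Argmax$, is precisely the argument intended.
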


\begin{remark}\label{rem:KLY} Observe that in the notation of \cite{KaiwenMinghua} the closure of the union on the left hand side of \eqref{eq:corspecial} coincides with the outer limit of the Fr\'echet subdifferentials of the support of $\partial f(\bar x)$. Hence we answer affirmatively the open question of \cite{KaiwenMinghua} on whether such outer limit is a subset of the right hand side of \eqref{eq:corspecial}.
\end{remark}

In Corollary~\ref{cor:gen3.2lin} we strengthen Theorem~3.2 of \cite{OuterLimits}, dropping the affine independence assumption. We first recall the notation from \cite{OuterLimits} and the related geometric constructions.

Let $g:X\to \R$ be a pointwise maximum of smooth functions, i.e.
\begin{equation}\label{eq:defg}
g (x) = \max_{j\in J} g_j(x), \quad  g_j \in C^1(X) \quad \forall j \in J,
\end{equation}
where $J$ is a finite index set. As in \cite{OuterLimits} define the collection $\D(\bar x)$ of index subsets $D\subset J(\bar x)$ such that the following system is consistent with respect to $d$
\begin{equation}\label{eq:consistent-system}
\left\{
\begin{array}{lr}
\langle \nabla g_j(\bar x), d\rangle = 1, &   j \in D,\\
\langle \nabla g_j(\bar x),d \rangle <1, & j \in J(\bar x)\setminus D.
\end{array}	
\right\}
\end{equation}

\begin{corollary}\label{cor:gen3.2lin}
 Let $g(x)$ to be the pointwise maximum of smooth functions as in \eqref{eq:defg}. Then
	\begin{equation}
	\bigcup_{D\in\D(\bar{x})}\mathrm{conv}\left\{  \nabla g_{j}(\bar
	{x}),\;j\in D\right\}  =\bigcup_{p\in \Sph ,g^{\prime}(\bar{x}
		,p)>0}\Argmax_{v\in\partial g(\bar{x})}\langle v,p\rangle
	\subseteq
	\Limsup_{
		\genfrac{}{}{0pt}{}{x\rightarrow\bar{x}}{g(x)>g(\bar{x})}
	}\partial g(x), \label{eqn:123}
	\end{equation}
	in other words, in \cite[Theorem~3.2]{OuterLimits} the subsets $\mathcal{D}_{AI}(\bar
	{x})$ can be replaced by $\mathcal{D}(\bar{x})$.
	
	Moreover when all $\{g_j\}_{j\in J}$ are affine we have an identity (instead of an inclusion)
	in (\ref{eqn:123}).
\end{corollary}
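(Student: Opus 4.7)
The argument splits naturally into three stages, matching the three assertions of the corollary.

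First, I would establish the equality of the two unions on the left of \eqref{eqn:123}. Proposition~\ref{prop:submax} gives $\partial g(\bar x) = \conv\{\nabla g_j(\bar x): j\in J(\bar x)\}$ together with $g'(\bar x, p) = \max_{j\in J(\bar x)}\langle \nabla g_j(\bar x), p\rangle$, and combining this with Proposition~\ref{prop:classic-subdiff} yields
\[
\Argmax_{v\in\partial g(\bar x)}\langle v, p\rangle = \conv\{\nabla g_j(\bar x): j\in J(\bar x,p)\},
\]
where $J(\bar x,p):=\{j\in J(\bar x):\langle\nabla g_j(\bar x),p\rangle=g'(\bar x,p)\}$. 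If $p\in\Sph$ satisfies $g'(\bar x,p)>0$, the scaled vector $d:=p/g'(\bar x,p)$ certifies that $D:=J(\bar x,p)$ lies in $\D(\bar x)$ via \eqref{eq:consistent-system}. Conversely, given $D\in\D(\bar x)$ with witness $d$, note $d\neq 0$ since $D\neq\emptyset$, and take $p:=d/\|d\|$; then $g'(\bar x,p)=1/\|d\|>0$ and $J(\bar x,p)=D$, so the same identity matches the two unions termwise.

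Second, the inclusion into the outer limit is a direct consequence of Corollary~\ref{cor:gen3.2} applied to $f=g$, which is legal because Proposition~\ref{prop:submax} ensures $g$ is Hadamard directionally differentiable with sublinear derivative. The closure present in Corollary~\ref{cor:gen3.2} is innocuous here: $\D(\bar x)$ is finite and each $\conv\{\nabla g_j(\bar x):j\in D\}$ is a compact convex polytope, so the left-hand side of \eqref{eqn:123} is already closed.

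Third, for the affine case I would establish the reverse inclusion
\[
\Limsup_{x\to\bar x,\, g(x)>g(\bar x)}\partial g(x)\;\subseteq\;\bigcup_{D\in\D(\bar x)}\conv\{\nabla g_j(\bar x):j\in D\}.
\]
Pick $y$ in the outer limit with approximating sequences $x_k\to\bar x$, $y_k\to y$, $y_k\in\partial g(x_k)$, $g(x_k)>g(\bar x)$. Continuity of the $g_j$ gives $J(x_k)\subseteq J(\bar x)$ for large $k$; passing to a subsequence I may assume $J(x_k)\equiv D$. Because the $g_j$ are affine with constant gradients, Proposition~\ref{prop:submax} gives $\partial g(x_k)=\conv\{\nabla g_j:j\in D\}$, which is closed, so $y$ lies in it. To certify $D\in\D(\bar x)$, set $d_k:=(x_k-\bar x)/(g(x_k)-g(\bar x))$ and use the exact first-order identity $g_i(x_k)-g_i(\bar x)=\langle\nabla g_i,x_k-\bar x\rangle$ together with $g_j(x_k)=g(x_k)$ for $j\in D$, $g_{j'}(x_k)<g(x_k)$ for $j'\in J(\bar x)\setminus D$, and $g_i(\bar x)=g(\bar x)$ for all $i\in J(\bar x)$, to check that $d_k$ satisfies the system \eqref{eq:consistent-system}; hence $D\in\D(\bar x)$.

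The main obstacle is the last paragraph: in the general smooth case, dividing by $\|x_k-\bar x\|$ only gives an asymptotic version of \eqref{eq:consistent-system}, so the strict inequalities for $j'\in J(\bar x)\setminus D$ could collapse in the limit. The affine hypothesis circumvents this entirely, since the first-order identity is exact and so $d_k$ itself realises \eqref{eq:consistent-system} strictly at every index $k$, with no need to take a limit at all.
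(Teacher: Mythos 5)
Your proposal is correct. The first two stages coincide with the paper's own proof: the termwise identification of the two unions via the scaling $d=p/g'(\bar x,p)$ (and its inverse $p=d/\|d\|$) is exactly the argument the paper uses to establish its intermediate identity, and the final inclusion is quoted from Corollary~\ref{cor:gen3.2} in both cases. Where you genuinely diverge is the affine case. The paper observes that near $\bar x$ the function $g$ coincides with $g(\bar x)+\sigma_{\partial g(\bar x)}(x-\bar x)$, so that $\partial g(x)=\Argmax_{v\in\partial g(\bar x)}\langle v,x-\bar x\rangle$ by Proposition~\ref{prop:classic-subdiff}; since only finitely many distinct faces occur, the $\Limsup$ collapses to the finite union over $p\in\Sph$ with $g'(\bar x;p)>0$, i.e.\ equality with the \emph{middle} term of \eqref{eqn:123}. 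You instead prove equality with the \emph{left} term directly: you extract a subsequence with constant active set $D=J(x_k)\subseteq J(\bar x)$, use the constancy of the gradients to conclude $y\in\conv\{\nabla g_j:j\in D\}$, and then exhibit the explicit witness $d_k=(x_k-\bar x)/(g(x_k)-g(\bar x))$ for the consistency of \eqref{eq:consistent-system}, which is legitimate precisely because the first-order expansion of an affine function is exact, so the strict inequalities for $j\in J(\bar x)\setminus D$ hold at every $k$ rather than only asymptotically. Your route is more combinatorial and self-contained (no appeal to the support-function identification or to the finiteness of the face lattice), while the paper's route is shorter given its earlier machinery; your closing remark correctly pinpoints why the argument breaks for merely smooth $g_j$, consistent with Example~\ref{eg:sliced}.
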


\begin{proof}
We begin by showing  the following identity:
	\begin{equation}
	\displaystyle\bigcup_{D\in\mathcal{D}(\bar{x})}\co\{\nabla g_{i}(\bar{x}),i\in
	D\}=\bigcup_{p\in \Sph ,g^{\prime}(\bar{x},p)>0}\Argmax_{v\in\partial
		g(\bar{x})}\langle v,p\rangle. \label{eq:pf33445_new}
	\end{equation}
	Observe that explicitly the equality below holds for all $p$ (see Proposition~\ref{prop:submax})
	\[
	\partial g(x)=\co\{\nabla g_{i}(x)\,|\,i\in J(x)\},\qquad g^{\prime}
	(x;p)=\max_{v\in\partial g(x)}\langle p,v\rangle \quad \forall p,
	\]
	hence, for each direction $p$ that features in the union on the right hand
	side of \eqref{eq:pf33445_new} the relevant $\Argmax$ gives the support face of 	the subdifferential.
	Explicitly, fix $p\in \Sph$ and let
	\[
	s(p):=g^{\prime}(x;p)=\max_{v\in\partial g(\bar{x})}\langle v,p\rangle
	=\max_{j\in J(\bar{x})}\langle\nabla g_{j}(\bar{x}),p\rangle,
	\]
	then
	\begin{align*}
	\bigcup_{p\in \Sph \atop g^{\prime}(\bar{x},p)>0}
	\Argmax_{v\in\partial g(\bar{x})}\langle v,p\rangle 
& =\bigcup_{p\in \Sph\atop s(p)>0}\Argmax_{v\in\co_{j\in J(\bar{x})}\{\nabla g_{j}(\bar{x})\}}\langle
v,p\rangle.
\end{align*}
We now get back to the definition of our index subsets $\mathcal{D}$. The system \eqref{eq:consistent-system}
is consistent for some nonempty $D\subset J(\bar x)$ and $d\in\R^{n}\setminus \{0\}$ if and only if for $p=d/\|d\|$
$$
g'(\bar x; p) = \frac{1}{\|d\|} g'(\bar x;d) = \frac{1}{\|d\|}\max_{j\in J(\bar x)}\langle\nabla g_{j}(\bar{x}), d\rangle = \frac{1}{\|d\|}>0,
$$
and
$$
\Argmax_{v\in \co\{\nabla g_{i}(\bar x)\,|\,i\in J(\bar x)\} } \langle v,p\rangle = \{\nabla g_i(\bar x) \, |\, i\in D\}.
$$
hence we get \eqref{eq:pf33445_new}. The last inclusion of \eqref{eqn:123} follows from Corollary~\ref{cor:gen3.2}.

Finally, to show that in the affine case an equality holds in \eqref{eq:pf33445_new}, observe that there is a sufficiently small neighbourhood $N(\bar x)$ of $\bar x$ on which the affine function $g$ coincides with the sum $g(\bar x) + \sigma_\partial g(\bar x) (x-\bar x)$, where $\sigma_\partial g(\bar x) (\cdot)$ is the support of the subdifferential, and hence for any $x$ in this neighbourhood we have $\partial g(x) = \partial \sigma_\partial g(\bar x) (x-\bar x)$. Since the number of different subdifferentials of points in this neighbourhood is finite, the right hand side is in fact the union
$$
\Limsup_{
		\genfrac{}{}{0pt}{}{x\rightarrow\bar{x}}{g(x)>g(\bar{x})}
	}\partial g(x) = \bigcup_{x\in N(\bar x) \atop g(x)>g(\bar x)
			} \partial \sigma_{\partial g(\bar x)}(x)
			=\bigcup_{p\in \Sph \atop g'(\bar x;p)>0
						}\Argmax_{v\in\partial g(\bar{x})} \langle v, p\rangle,
$$
where the last equality follows from Proposition~\ref{prop:classic-subdiff}. Note here that $g(x)>g(\bar x)$ if and only if $\partial \sigma_\partial g(\bar x) (x-\bar x)>0$ or equivalently $g'(\bar x, x-\bar x)>0$.
\end{proof}

In the expression \eqref{eq:pf33445_new}, the $\Argmax$ construction gives the support faces of the subdifferential, while the positivity constraint on the directional derivative means that zero lies in the same (strict) half-space of some hyperplane exposing the face as the rest of the subdifferential. We consider a very simple example that demonstrates how this construction works.

\begin{example}\label{eg:marco-illustr} Let $g = \min_{1\in \{1,\dots, 6\}} g_i$, where $g_i:\R^2\to \R$ are linear functions, 
$$
g_1 (x) = 5x_1, \quad 
g_2 (x) = 2x_1+x_2, \quad
g_3 (x) = x_1, 
$$
$$
g_4 (x) = 2x_1-2x_2, \quad
g_5 (x) = 4x_1-2x_2, \quad
g_6 (x) = 3x_1-x_2.
$$
Observe that each of these functions is active at zero, so we have $J(0) = \{1,2,\dots, 6\}$. Furthermore, we have the gradients 
$$
a_1 := \nabla g_1(0) = (5,0)^T, \quad
a_2 := \nabla g_2(0) = (2,1)^T, \quad
a_3 := \nabla g_3(0) = (1,0)^T, 
$$
$$
a_4 := \nabla g_4(0) = (2,-2)^T, \quad
a_5 := \nabla g_5(0) = (4,-2)^T, \quad
a_6 := \nabla g_6(0) = (3,-1)^T.
$$
These gradients are shown in the left hand side image of Fig.~\ref{fig:marco}.
\begin{figure}[ht]
{\centering \includegraphics[scale = 0.9]{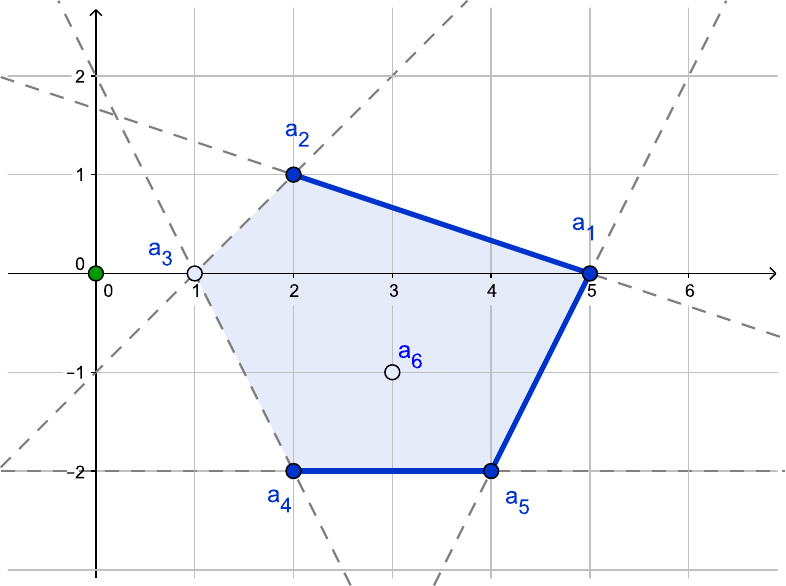} \qquad  \includegraphics[scale = 0.9]{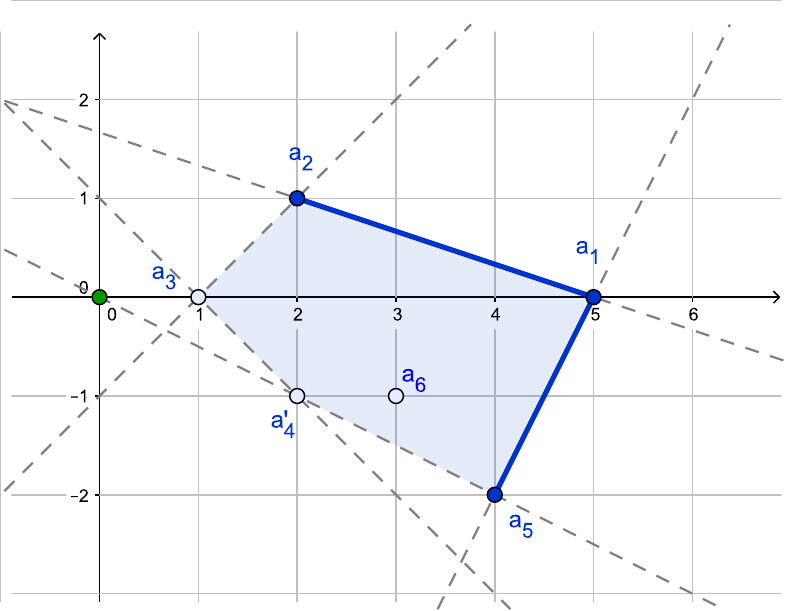} \\}
\caption{The geometric constructions for Example~\ref{eg:marco-illustr}.}
\label{fig:marco}
\end{figure}

It is not difficult to see that 
$$
\mathcal{D}(0) = \{\{1\}, \{2\},\{4\},\{5\}, \{1,2\},\{1,5\}, \{4,5\}\},
$$
the relevant set 
$$
\bigcup_{D\subset \mathcal D(0)} \co \{\nabla g_j, j\in D\} = \bigcup_{D\subset \mathcal D(0)} \co \{a_j, j\in D\}
$$
is shown in Fig.~\ref{eg:marco-illustr} with thick solid lines.

If we replace the function $g_4$ with $g'_4 = 2x_1-x_2$, and construct the relevant arrangement of the gradients $a_1, a_2, a_3, a'_4, a_5, a_6$, we would lose the index subsets $\{4\}$ and $\{4,5\}$, since now zero is on the line connecting $a_4'$ with $a_5$, and there is no $d$ that would make  \eqref{eq:consistent-system} consistent for $D=\{4\}$ and $D = \{4,5\}$ (see the image on the right-hand side of Fig.~\ref{eg:marco-illustr}). 
\end{example}

The next example is taken from \cite{KaiwenMinghua}. It shows that even for smooth functions it is not always possible to replace the inclusion in \eqref{eqn:123} with an equality.
 
\begin{example}\label{eg:sliced}Let $g:\R^2\to \R$ be defined as the piecewise maximum of two smooth functions, 
    $$f(x): = \max \{g_1(x), g_2(x)\},$$
    where
$$
g_1(x) = x_1^2 + x_2^2 + \frac{1}{2}(x_1+x_2) = \left(x_1 + \frac{1}{4}\right)^2 + \left(x_2 + \frac{1}{4}\right)^2 - \frac{1}{8}, \quad f_2(x) = x_1 + x_2.
$$   
The graph of $g$ is shown in Fig.~\ref{Fig:plot-sliced}.
\begin{figure}[ht]
{\centering \includegraphics[height=180pt]{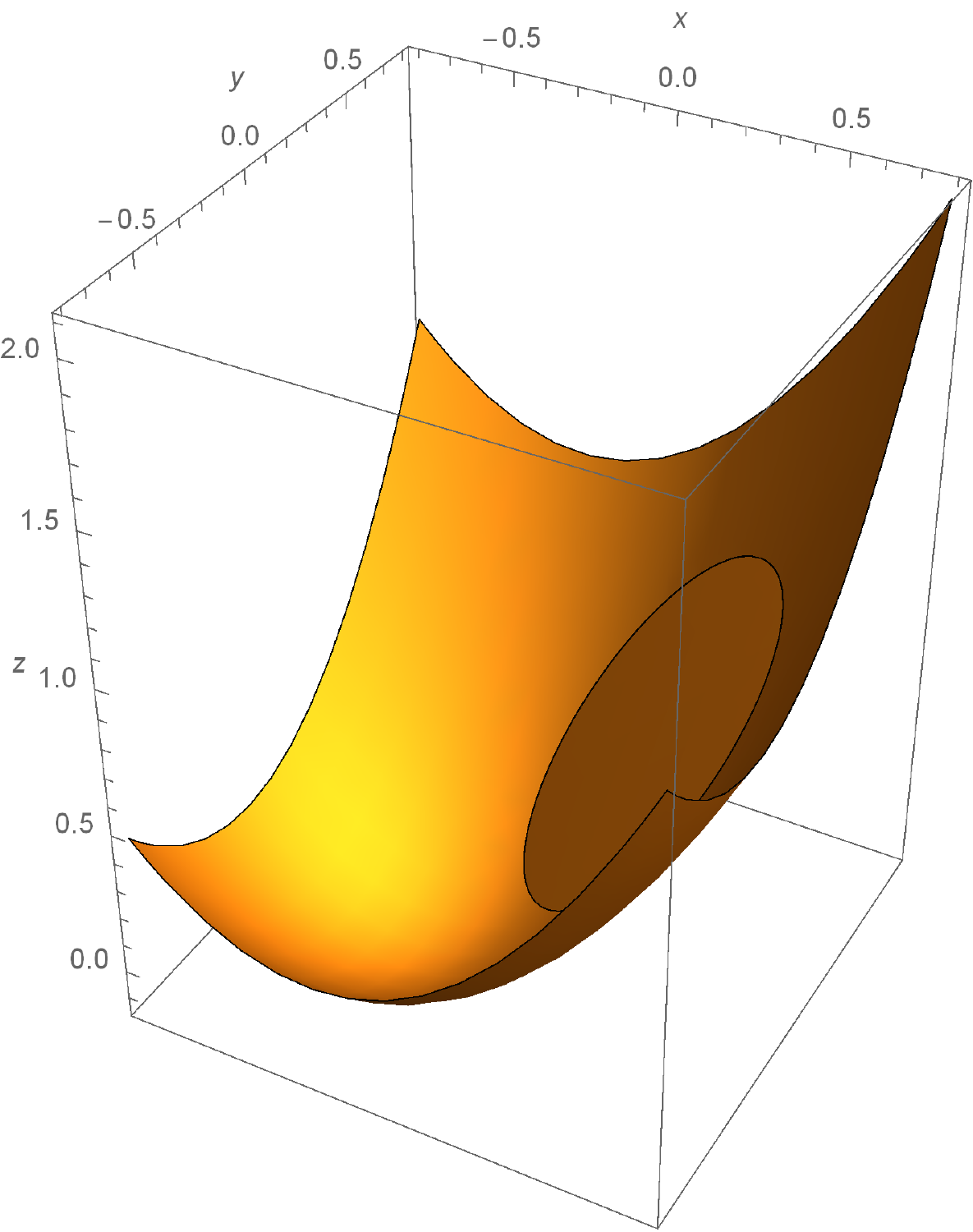}\qquad
\includegraphics[height=180pt]{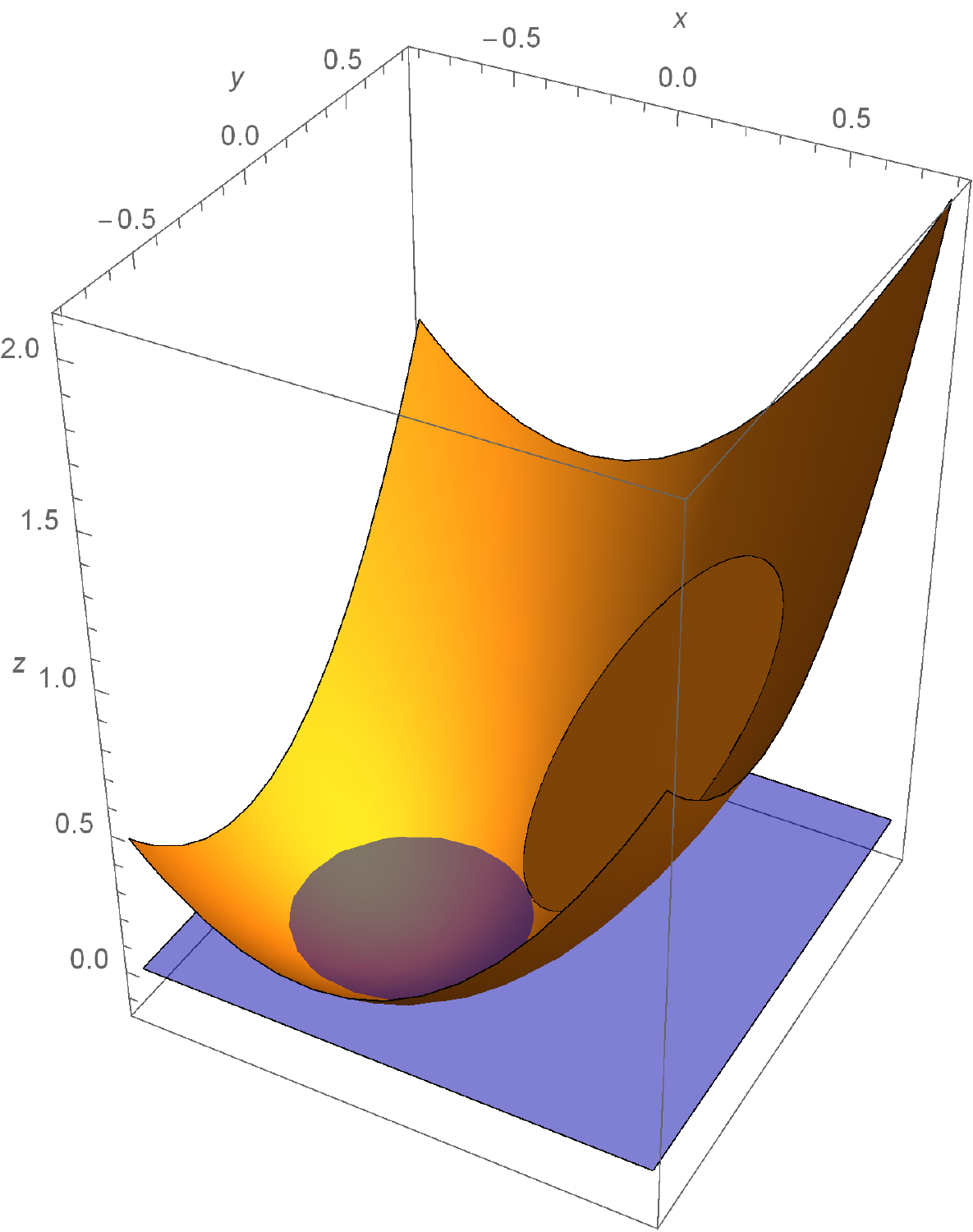}\\}
\caption{The function $\varphi$ from Example~\ref{eg:sliced}.}
\label{Fig:plot-sliced}
\end{figure}
It is clear from the illustration that the directional derivative at the point $x=0_2$, where the level set $\{x\,|\, g(x) = 0\}$ touches the linear `slice' of the graph, the Hadamard directional derivative is a piecewise linear function, however the outer limits of subdifferentials capture the liming gradients that come from the gradients of the curved parts of the graph, and this results in the underestimation of the outer limit. We have (see the explanation in \cite{KaiwenMinghua}),
$$
	\Limsup_{
		\genfrac{}{}{0pt}{}{x\rightarrow\bar{x}}{g(x)>g(0)=0}
	}\partial g(x) = \co \left\{\left(\frac 1 2 , \frac 1 2 \right)^T, \left(1  , 1  \right)^T \right\},
$$
however checking the consistency of the system \eqref{eq:consistent-system} for different index subsets, $\{1\}, \{2\}, \{1,2\}$, it is easy to see that this system has a solution $d$ only for $\{2\}$, hence, $\mathcal{D}(0) = \{2\}$, and the left-hand side of \eqref{eqn:123} gives
$$
	\bigcup_{D\in\D(0)}\mathrm{conv}\left\{  \nabla g_{j}(0),\;j\in D\right\}  =  \left\{  \nabla g_{j}(0),\;j\in \{2\}\right\} =\{ \nabla g_2(0)\} = \{\left(1  , 1  \right)^T\}.
$$
We have hence `lost' the second gradient.
%
%
%
\end{example}

\section{Exact representations for piecewise affine functions    }

We are now ready to generalise Theorem~3.1 from \cite{OuterLimits}. We first prove that for positively homogeneous functions the inclusion \eqref{eq:3.2main} can be replaced by an equality.

\begin{lemma}\label{lem:techPH} Let $h:\R^n\to \R$ be a pointwise minimum of a finite number of sublinear functions, i.e.
$$
h(x) = \min_{i\in I} h_i(x), \quad h_i (x) = \max_{v\in C_i} \langle v, x\rangle \quad \forall i \in I,
$$	
where $C_i$ is a compact convex set for each $i\in I$. Then
\begin{equation}\label{eq:3.2mainspecial}
 \cl 	\bigcup_{x\in \Sph \atop h(x)>0} \bigcap_{i\in I(x)} \Argmax_{v\in C_i}\langle v, x\rangle = \Limsup_{x\to 0\atop h(x) >0} \partial h(x).
\end{equation}
\end{lemma}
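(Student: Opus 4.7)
The plan is to establish both inclusions in \eqref{eq:3.2mainspecial} separately, exploiting positive homogeneity of $h$ to translate between the notation in Corollary~\ref{cor:genth3.2} and the specific form here.

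For the inclusion $\subseteq$, I would simply apply Corollary~\ref{cor:genth3.2} at $\bar x = 0$. The translation between notations requires the following identifications: since each $h_i$ is sublinear, $h_i(0) = 0$, so $I(0) = I$, and by positive homogeneity $h'(0;p) = h(p)$ for all $p$. By Proposition~\ref{prop:classic-subdiff} (or directly from sublinearity) $\partial h_i(0) = C_i$, whence
$$
\max_{v\in \partial h_i(0)}\langle v,p\rangle = h_i(p),
$$
so the index set $I(0,p)$ from \eqref{eq:Ip} reduces to $\{i_0 \in I \,|\, h_{i_0}(p) = \min_{i\in I} h_i(p)\} = I(p)$. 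Substituting these identifications into \eqref{eq:3.2cormain} yields precisely the $\subseteq$ direction of \eqref{eq:3.2mainspecial}, after renaming the dummy variable $p$ as $x$.

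For the inclusion $\supseteq$, I would argue directly from the definition of the outer limit. Take $y$ in the right-hand side, so there exist $x_k \to 0$ with $h(x_k)>0$ and $y_k \in \partial h(x_k)$ with $y_k \to y$. By Proposition~\ref{prop:calc-intersection},
$$
\partial h(x_k) = \bigcap_{i\in I(x_k)} \partial h_i(x_k),
$$
and by Proposition~\ref{prop:classic-subdiff},
$$
\partial h_i(x_k) = \Argmax_{v\in C_i}\langle v,x_k\rangle.
$$
Now set $p_k := x_k/\|x_k\| \in \Sph$. Positive homogeneity of each $h_i$ gives $h_i(p_k) = h_i(x_k)/\|x_k\|$, so the active index set is invariant under positive scaling: $I(p_k) = I(x_k)$, and $h(p_k) > 0$. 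The $\Argmax$ set is likewise invariant under positive scaling of the direction, so
$$
y_k \in \bigcap_{i\in I(p_k)} \Argmax_{v\in C_i}\langle v, p_k\rangle,
$$
placing $y_k$ in the union on the left-hand side of \eqref{eq:3.2mainspecial}. Passing to the closure, $y$ lies in the left-hand side.

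The main (mild) obstacle is the bookkeeping of index sets and subdifferentials under the various scalings; once the identifications $I(0,p) = I(p)$, $\partial h_i(0) = C_i$, and $h'(0;\cdot) = h(\cdot)$ are made explicit, both inclusions reduce to one-line arguments invoking already established results. No new technical machinery is needed.
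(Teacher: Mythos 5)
Your proposal is correct and follows essentially the same route as the paper: the inclusion $\subseteq$ is obtained by specialising Theorem~\ref{con:gen3.2} (equivalently Corollary~\ref{cor:genth3.2}) at $\bar x=0$ with the identifications $\partial h_i(0)=C_i$, $h'(0;\cdot)=h(\cdot)$ and $I(0,p)=I(p)$, and the inclusion $\supseteq$ is the same direct argument via Propositions~\ref{prop:calc-intersection} and~\ref{prop:classic-subdiff} together with the scale-invariance of the active index sets and of the $\Argmax$ sets. Nothing is missing.
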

\begin{proof} Observe that the inclusion ``$\subseteq$'' in \eqref{eq:3.2mainspecial} follows directly from Theorem~\ref{con:gen3.2} substituting $\bar x = 0$, observing that $h'(0;p) = h(p)$, $h(0)=0$ (so $h(p)>0$ iff $h'(0;p)>0$) and that the right hand side is a closed set. It remains to show the reverse inclusion. Choose any
$$
y \in \Limsup_{x\to 0\atop h(x) >0} \partial h(x).
$$
There exist sequences $\{x_k\}$ and $\{y_k\}$ such that $x_k\to 0$, $y_k \to y$ and $y_k \in \partial h(x_k)$. We have by Proposition~\ref{prop:calc-intersection}
$$
\partial h(x_k) = \bigcap_{i\in I(x_k)} \partial h_i(x_k);
$$
furthermore, Proposition~\ref{prop:classic-subdiff} yields
$$
\partial h_i(x_k) = \Argmax_{v\in C_i}\langle v, x_k\rangle,
$$
and hence
\begin{equation}\label{eq:344656456}
y_k \in \partial h(x_k) = \bigcap_{i \in I(x_k)} \Argmax_{v\in C_i}\langle v, x_k\rangle.
\end{equation}
Observe that since $x_k\neq 0$ and $h_i$'s are positively homogeneous, we have 
$$
\Argmax_{v\in C_i}\langle v, x_k\rangle = \Argmax_{v\in C_i}\left\langle v, \frac{x_k}{\|x_k\|}\right\rangle,  \qquad I(x_k) = I\left(\frac{x_k}{\|x_k\|}\right).
$$
Together with \eqref{eq:344656456} these observations yield
$$
y_k  \in  \bigcap_{i \in I(\frac{x_k}{\|x_k\|})} \Argmax_{v\in C_i}\left\langle v, \frac{x_k}{\|x_k\|}\right\rangle \subseteq 	\bigcup_{x\in \Sph \atop h(x)>0} \bigcap_{i\in I(x)} \Argmax_{v\in C_i}\langle v, x\rangle,
$$
and hence the limit of the sequence $\{y_k\}$ must indeed belong to the left hand side of \eqref{eq:3.2mainspecial}.
\end{proof}

We are now ready to obtain a generalisation of Theorem~3.1 in \cite{OuterLimits}.

\begin{theorem}\label{thm:gen3.2lin} Let $f:X \to \R$ be as in \eqref{eq:min-function}, and in addition assume that for every $i\in I$ the function $f_i$ is piecewise affine, i.e.
$$
f_i (x) = \max_{j\in J_i} (\langle a_{ij}, x\rangle + b_{ij}) \quad \forall i \in I,
$$
where $J_i$'s are finite index sets for each $i\in I$. 	
Then
\begin{equation}\label{eq:affine}
	\bigcup_{p\in \Sph \atop f'(\bar x; p)>0} \bigcap_{i\in I(x,p)} \Argmax_{v\in \partial f_i(\bar x)}\langle v, p\rangle = \Limsup_{x\to \bar x\atop f(x) >f(\bar x)} \partial f(x).
\end{equation}
\end{theorem}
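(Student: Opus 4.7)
The inclusion $\subseteq$ is immediate from Theorem~\ref{con:gen3.2} (even without any closure being taken), so the whole task reduces to showing the reverse inclusion together with the fact that the union on the left side of \eqref{eq:affine} is already closed. The plan is to reduce to the positively homogeneous setting treated by Lemma~\ref{lem:techPH} and then exploit the finite combinatorial structure of piecewise affine functions to drop the closure.

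For the reduction, I would use the piecewise affine structure as follows. On a sufficiently small neighborhood $U$ of $\bar x$ we have $f_i(x) = \max_{j\in J_i(\bar x)}(\langle a_{ij}, x\rangle + b_{ij})$, where $J_i(\bar x)$ denotes the active index set at $\bar x$; since $\langle a_{ij},\bar x\rangle + b_{ij} = f_i(\bar x)$ for $j\in J_i(\bar x)$, this gives $f_i(\bar x + s) - f_i(\bar x) = f_i'(\bar x; s)$ whenever $\bar x+s\in U$. Combined with the inclusion $I(\bar x+s)\subseteq I(\bar x)$ valid for small $s$, this yields $f(\bar x+s) - f(\bar x) = \min_{i\in I(\bar x)} f_i'(\bar x; s) = f'(\bar x; s)$ on a neighborhood of $0$. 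Setting $h(s) := f'(\bar x; s)$, I obtain a pointwise minimum (over $i\in I(\bar x)$) of sublinear functions $h_i(s) = \max_{v\in \partial f_i(\bar x)}\langle v, s\rangle$ whose underlying compact convex sets $C_i := \partial f_i(\bar x)$ are polytopes. Because the Fr\'echet subdifferential is invariant under translation and addition of constants, the identity $f(\bar x+s) = f(\bar x) + h(s)$ near $0$ gives $\partial f(\bar x+s) = \partial h(s)$ there, so
\begin{equation*}
\Limsup_{x\to\bar x,\, f(x)>f(\bar x)}\partial f(x) = \Limsup_{s\to 0,\, h(s)>0}\partial h(s).
\end{equation*}

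Next I would apply Lemma~\ref{lem:techPH} to $h$. The active index set $\{i\in I(\bar x): h(p)=h_i(p)\}$ appearing in that lemma coincides with $I(\bar x, p)$ as in \eqref{eq:Ip}, and $h(p)>0$ is exactly $f'(\bar x;p)>0$, so Lemma~\ref{lem:techPH} identifies the outer limit above with the closure of the union on the left side of \eqref{eq:affine}. To remove the closure, I use the piecewise affine structure one more time: each polytope $\partial f_i(\bar x)$ has only finitely many faces, so $\Argmax_{v\in\partial f_i(\bar x)}\langle v,p\rangle$ takes only finitely many values as $p$ varies, and $I(\bar x, p)\subseteq I(\bar x)$ likewise attains only finitely many values. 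Consequently $\bigcap_{i\in I(\bar x,p)}\Argmax_{v\in\partial f_i(\bar x)}\langle v,p\rangle$ takes only finitely many distinct values, each a closed convex set, and the union over $p$ is a finite union of closed sets, hence closed. The main obstacle, such as it is, is bookkeeping---carefully lining up the active index set from Lemma~\ref{lem:techPH} (applied to $h$) with the index set $I(\bar x, p)$ appearing in the statement---while the closedness step is purely combinatorial once the polytope structure of each $\partial f_i(\bar x)$ is in hand.
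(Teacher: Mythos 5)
Your proposal is correct and follows essentially the same route as the paper: reduce to the positively homogeneous case by noting that near $\bar x$ the piecewise affine data coincide with (a translate of) their first-order approximations, apply Lemma~\ref{lem:techPH} to $h(s)=f'(\bar x;s)$, and drop the closure because each polytope $\partial f_i(\bar x)$ has only finitely many faces, making the left-hand union a finite union of closed convex sets. Your write-up simply makes explicit the localisation and index-set bookkeeping that the paper leaves to the reader.
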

\begin{proof} Observe that when the sets $C_i$ in Lemma~\ref{lem:techPH} are polyhedral, there is no need for the closure operation in \eqref{eq:3.2mainspecial}, since there are finitely many different faces of each subdifferential, and we therefore have a finite union of closed convex sets which is always closed.

To finish the proof it remains to note that convex polyhedral functions are locally positively homogeneous and coincide with a translation of their first order approximations in a sufficiently small neighbourhood of each point. Since outer limits of subdifferentials are local notions, it is clear that the application of Lemma~\ref{lem:techPH} to the directional derivatives of the active functions yields the required result.
\end{proof}

The next example is a practical demonstration of the construction given in  Theorem~\ref{thm:gen3.2lin}.

\begin{example}\label{eg:min-max} Let $f = \min\{f_1,f_2\}$, where $f_1,f_2:\R^2\to \R$ are piecewise linear max-functions,
$$
f_1(x,y) = \max \{2 x + y,-x + y,-x - y,-y\}, \quad
f_2(x,y) = \max \{3 x + y,y,2x - y,3x-y\}.
$$
The graphs of $f_1$, $f_2$ and $f$ are shown in Fig~\ref{fig:min-max-example-plots}.
\begin{figure}[ht]
{\centering 
\includegraphics[scale = 0.3]{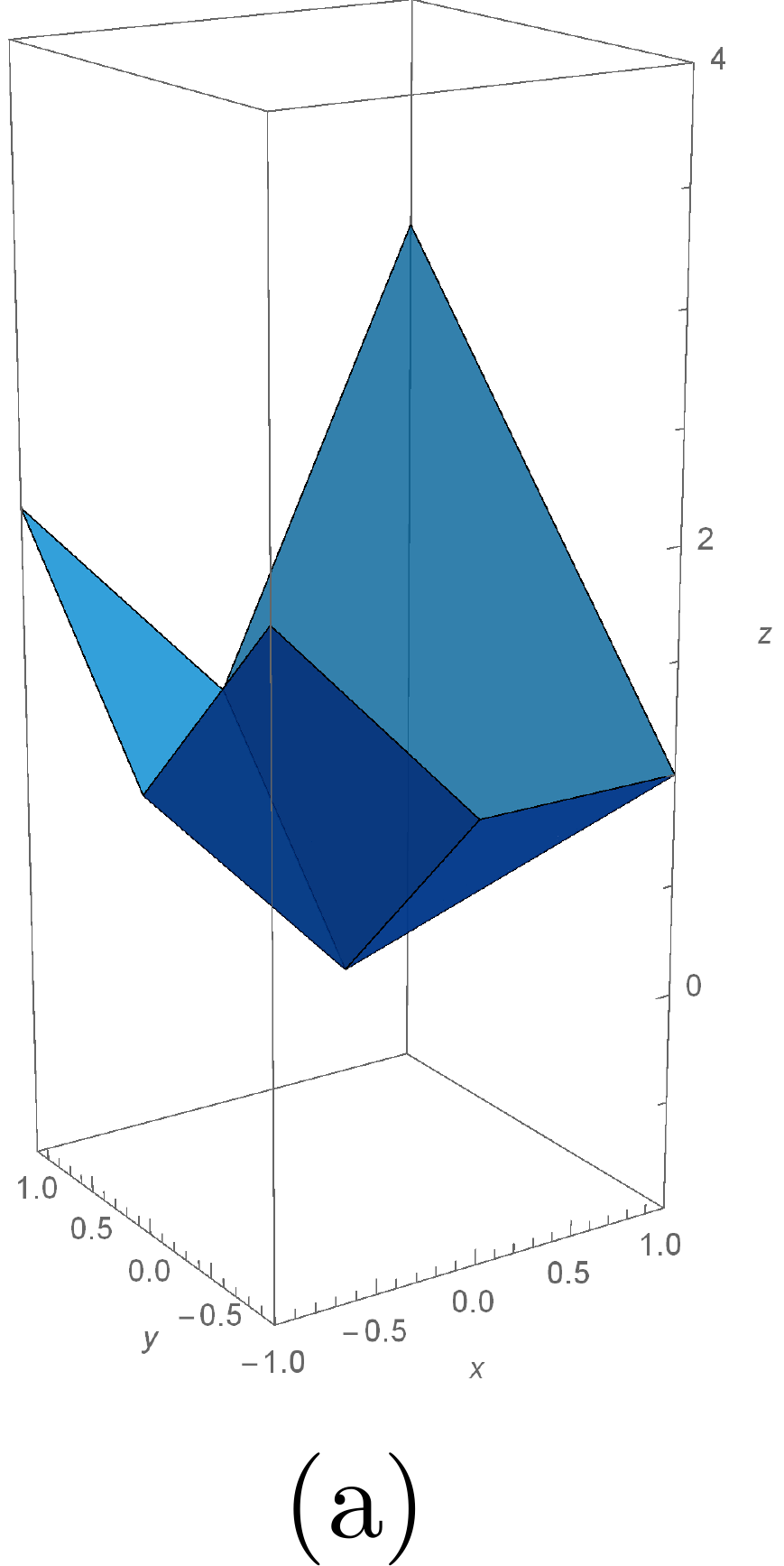} \quad  \includegraphics[scale = 0.3]{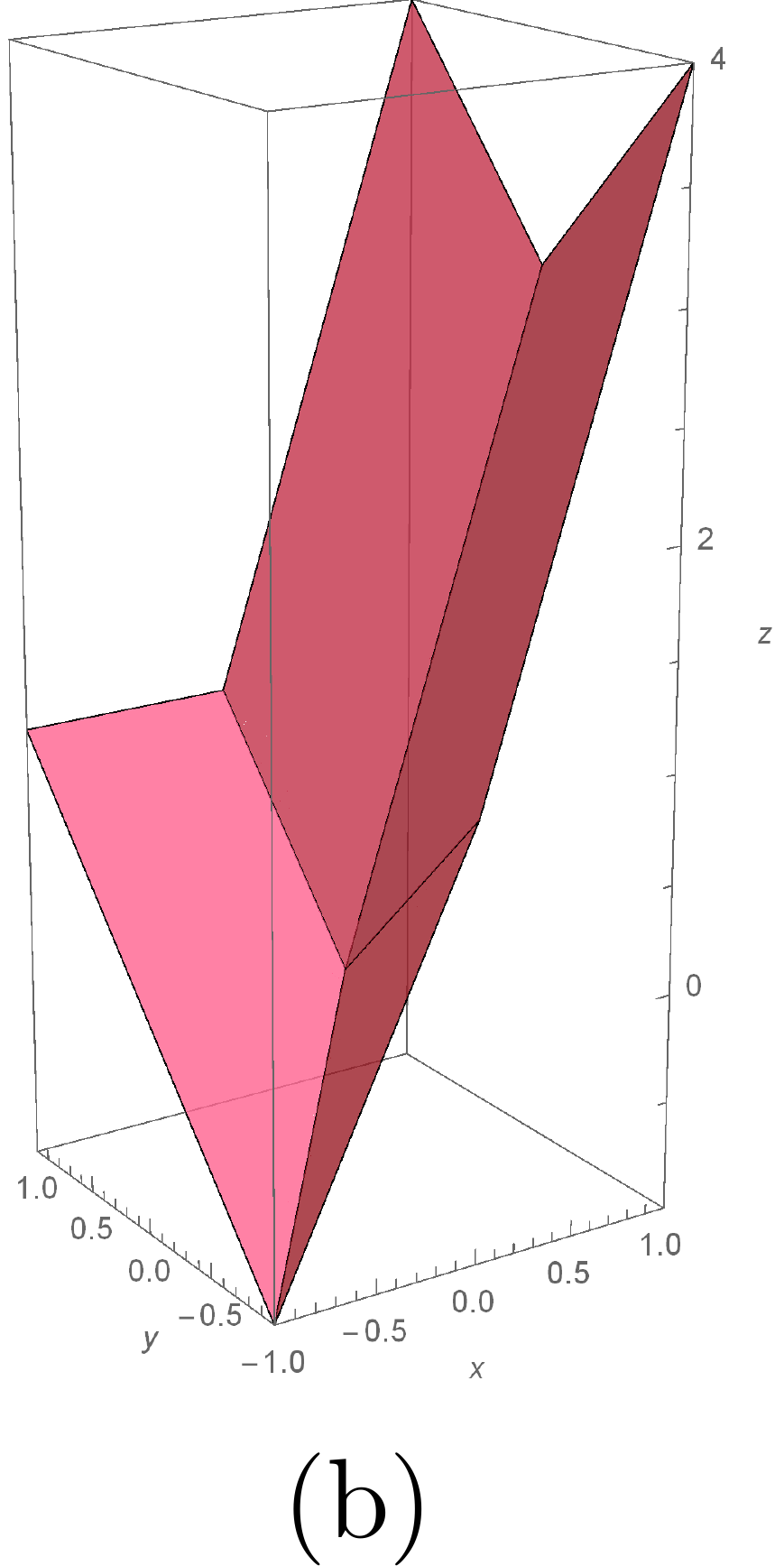} \quad  \includegraphics[scale = 0.3]{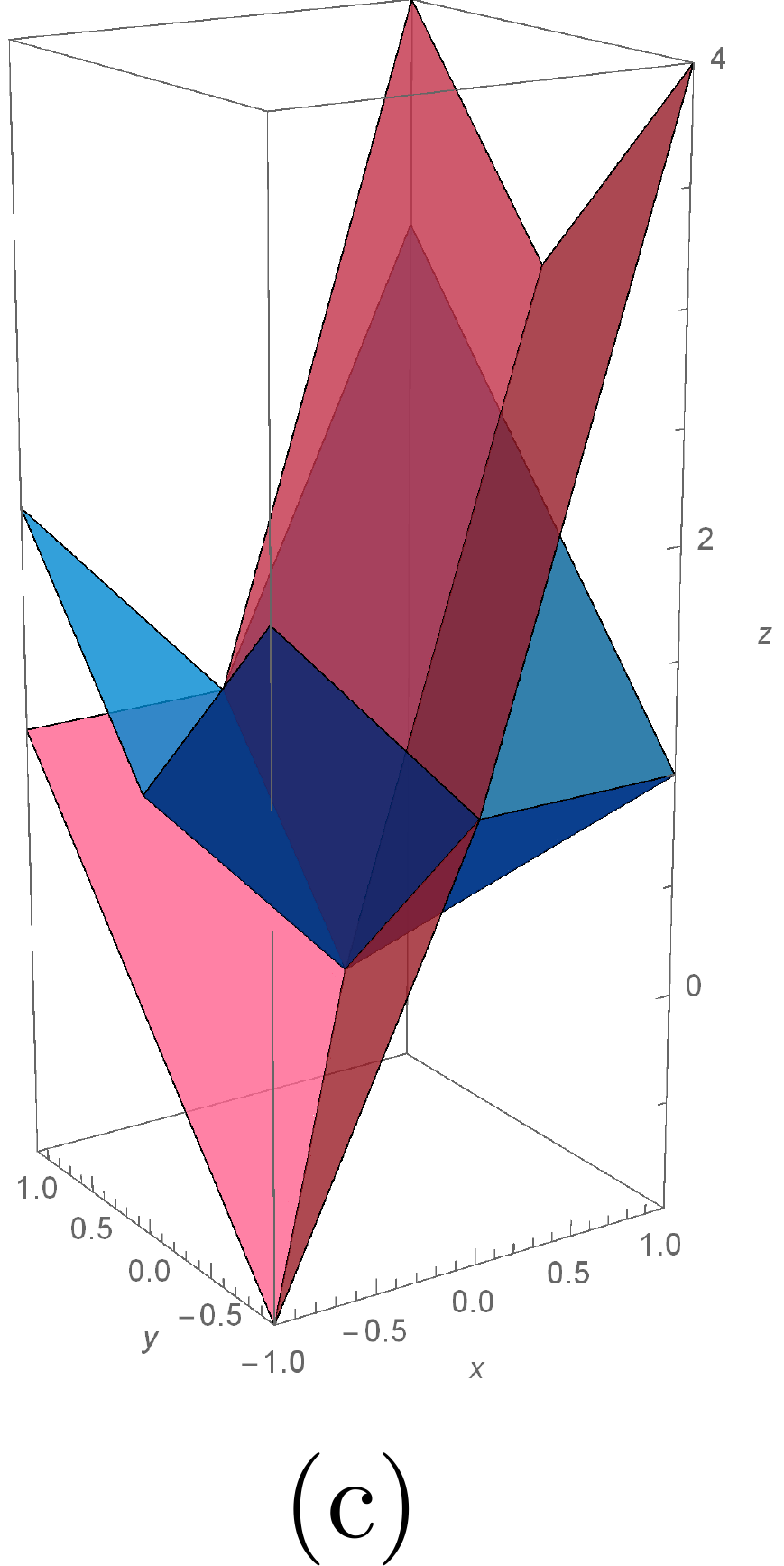} \quad  \includegraphics[scale = 0.3]{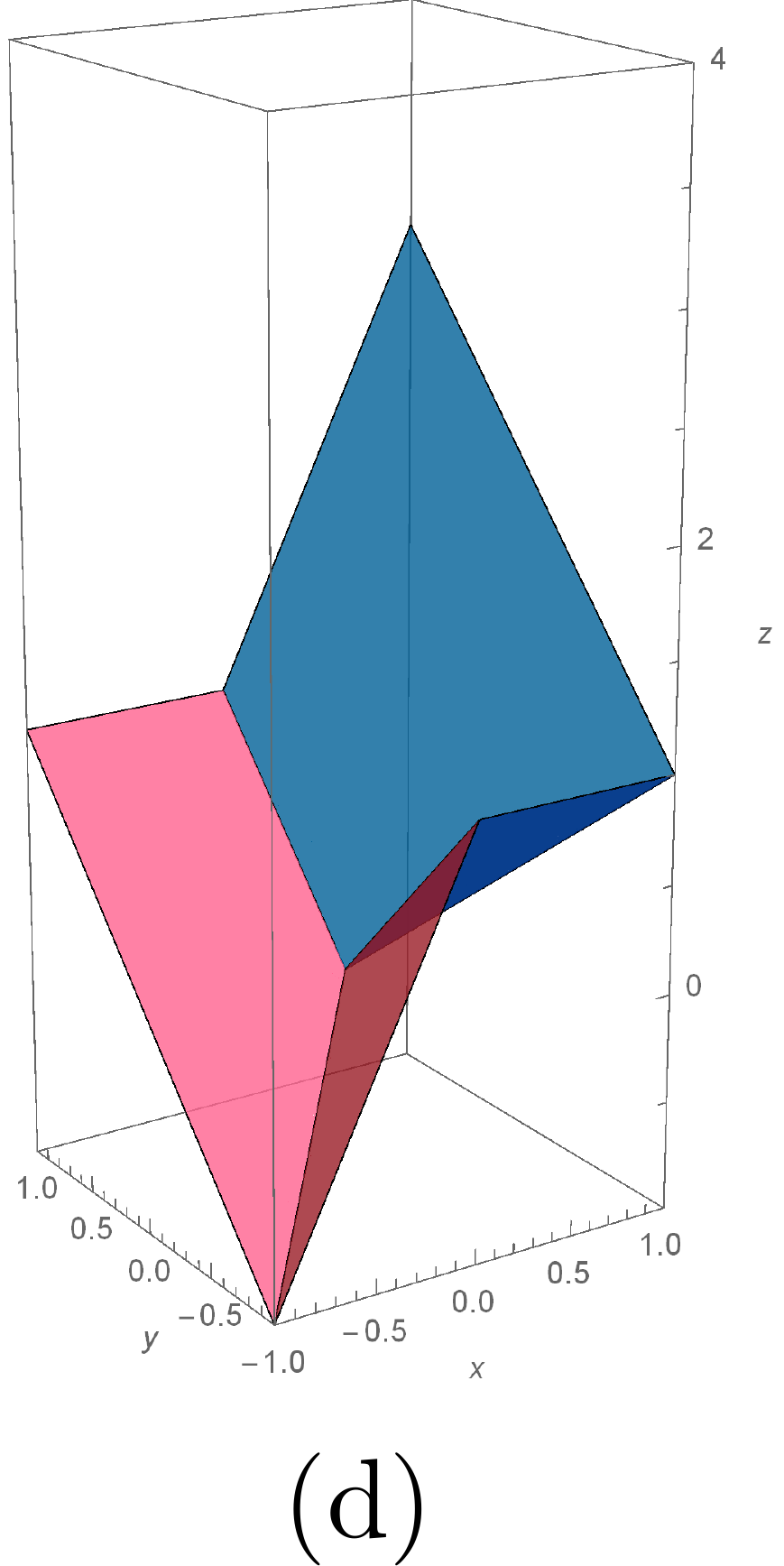} \quad  \includegraphics[scale = 0.3]{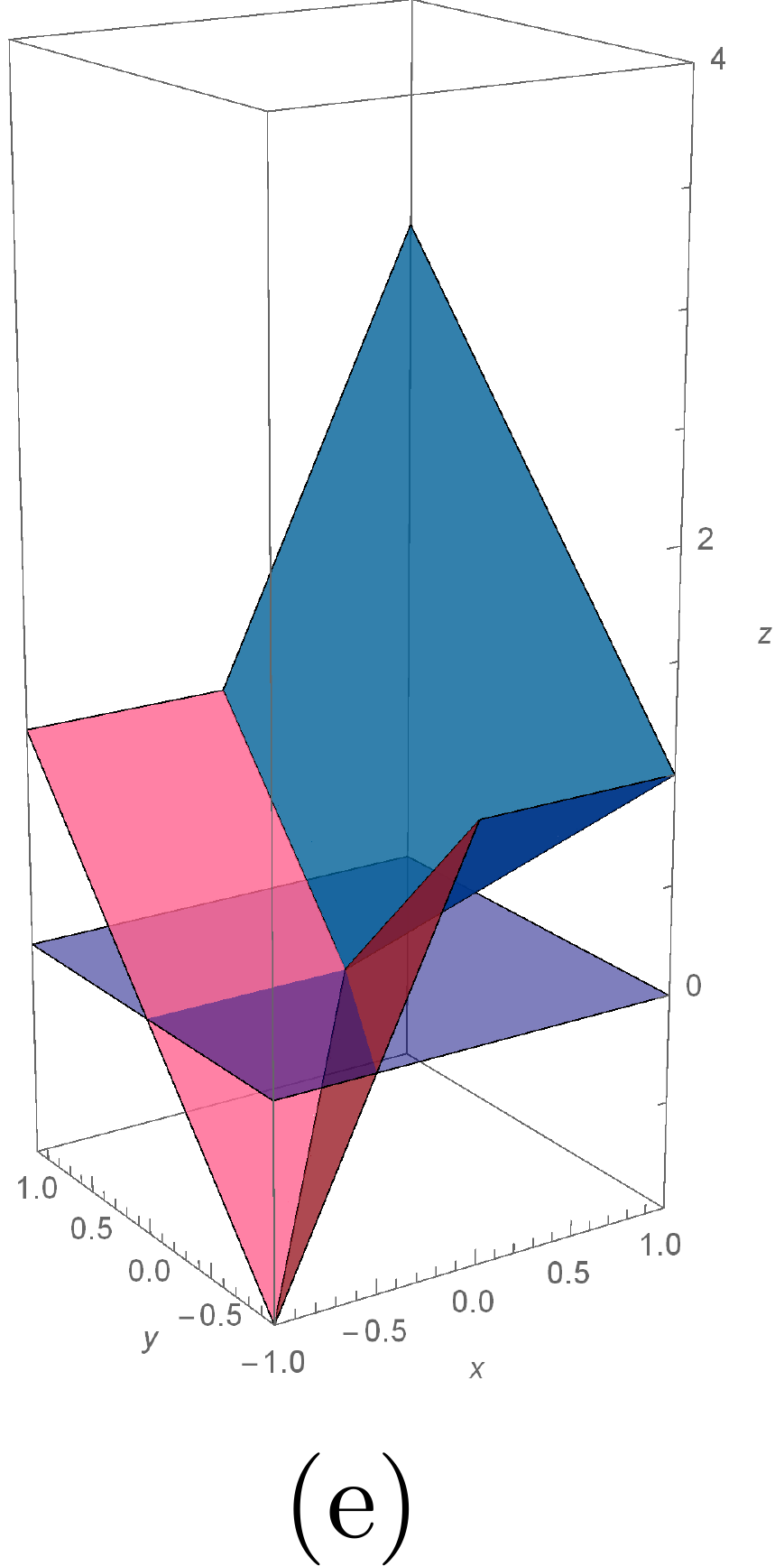}  \\}
\caption{Plots of the functions from Example~\ref{eg:min-max}, (a): $f_1$; (b): $f_2$; (c): $f_1$ and $f_2$ combined; (d): $f$; (e): $f$ with the plane $z =0$ demonstrating the relevant sublevel set.}
\label{fig:min-max-example-plots}
\end{figure}
The subdifferentials of $f_1$ and $f_2$ at zero are easy to compute:
\begin{align*}
\partial f_1(0) & = \co \{(2,1)^T, (-1,1)^T, (-1,-1)^T, (0,-1)^T\},\\
\partial f_2(0) & = \co \{(3,1)^T, (0,1)^T, (2,-1)^T, (3,-1)^T\}.
\end{align*}
The subdifferentials are shown in Fig.~\ref{fig:min-max-subdifferentials},
\begin{figure}[ht]
{\centering 
\includegraphics[scale = 1.1]{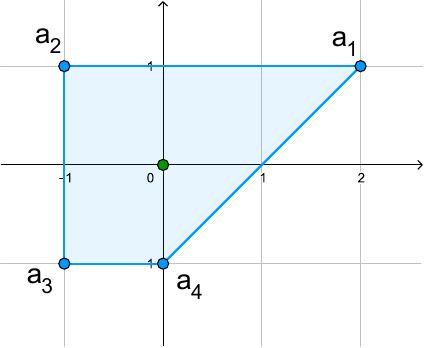} \;
\includegraphics[scale = 1.1]{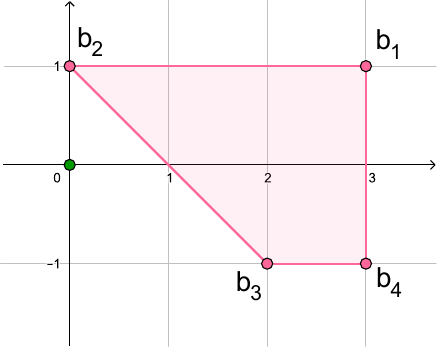} \;
\includegraphics[scale = 1.1]{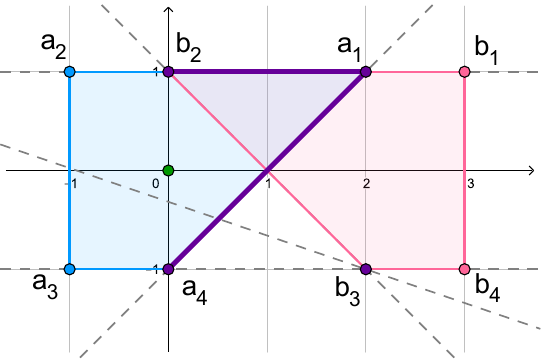} \\}
\caption{Example~\ref{eg:min-max}, from left to right: subdifferentials of the functions $f_1$ and $f_2$, and the construction of the outer limits of subdifferentials of $f$ at zero.}
\label{fig:min-max-subdifferentials}
\end{figure}
where $\partial f_1(0)= \co \{a_1,a_2,a_3,a_4\}$ and  $\partial f_2(0)= \co \{b_1,b_2,b_3,b_4\}$.

To carry out the calculation of the left-hand side expression in \eqref{eq:affine} it is enough to consider every vertex and face of the subdifferential and to verify that there is an exposing `minimal' hyperplane (line in our case) such that zero belongs to the relevant strictly negative subspace defined by this hyperplane. The relevant vector $p$ is the normal to such hyperplane. The resulting outer limit of subdifferentials is the set 
$$
\co \{a_1,b_2\} \cup \co \{a_1,a_4\} \cup \{b_3\}.
$$
We leave finer details to the reader. It is also not difficult to see from the graph that the two line segments correspond to the `convex' part of the graph, and the standalone point is the gradient of the linear part at the front of the plot in Fig.~\ref{fig:min-max-example-plots} (e) which is connected to the rest of the plot in a `concave' fashion.
\end{example}

We highlight here that the index-based representation that is valid for the convex case and that was used in Example~\ref{eg:marco-illustr} can not be generalised directly to the case of a min-max type function. This happens because the intersection of two polytopes of dimension 2 or higher can not always be represented via the convex hull of a subset of vertices of these polytopes, and hence the intersection in the left-hand side of \eqref{eq:affine} may not be representable as a convex hull of a selection of $a_{ij}$'s.


Note that in the case of a min-max type function the piecewise affine assumption is essential for the equality in \eqref{eq:affine} to hold. Consider the following semialgebraic example (unfortunately we could not recollect where the idea of this example came from).  
\begin{example}\label{eg:paraboloids}
Let $f= \min \{f_1, f_2\}$, where
$$
f_1(x,y)  = 1 - \frac{((x - 2)^2 + y^2)}{4}, \qquad f_2(x,y) =  -1 + ((x - 1)^2 + y^2).
$$
At the point $(x,y) = 0_2 = (0,0)$ we have
$$
f'(0_2, l) = \min \{f_1'(0_2, l), f_2'(0_2, l)\} = \min \{\langle \nabla f_1(0_2), l\rangle, \langle \nabla f_2(0_2), l\rangle\} =\min \{l_x, -2l_x\}\leq 0 \quad \forall l,
$$
therefore,
$$
\{l\in \R^2 \,|\, f'(0_2;l) >0\} = \emptyset,
$$
and so the expression on the left hand side of \eqref{eq:3.2main} produces the empty set.
\begin{figure}[ht]
\centering
\includegraphics[height=200pt]{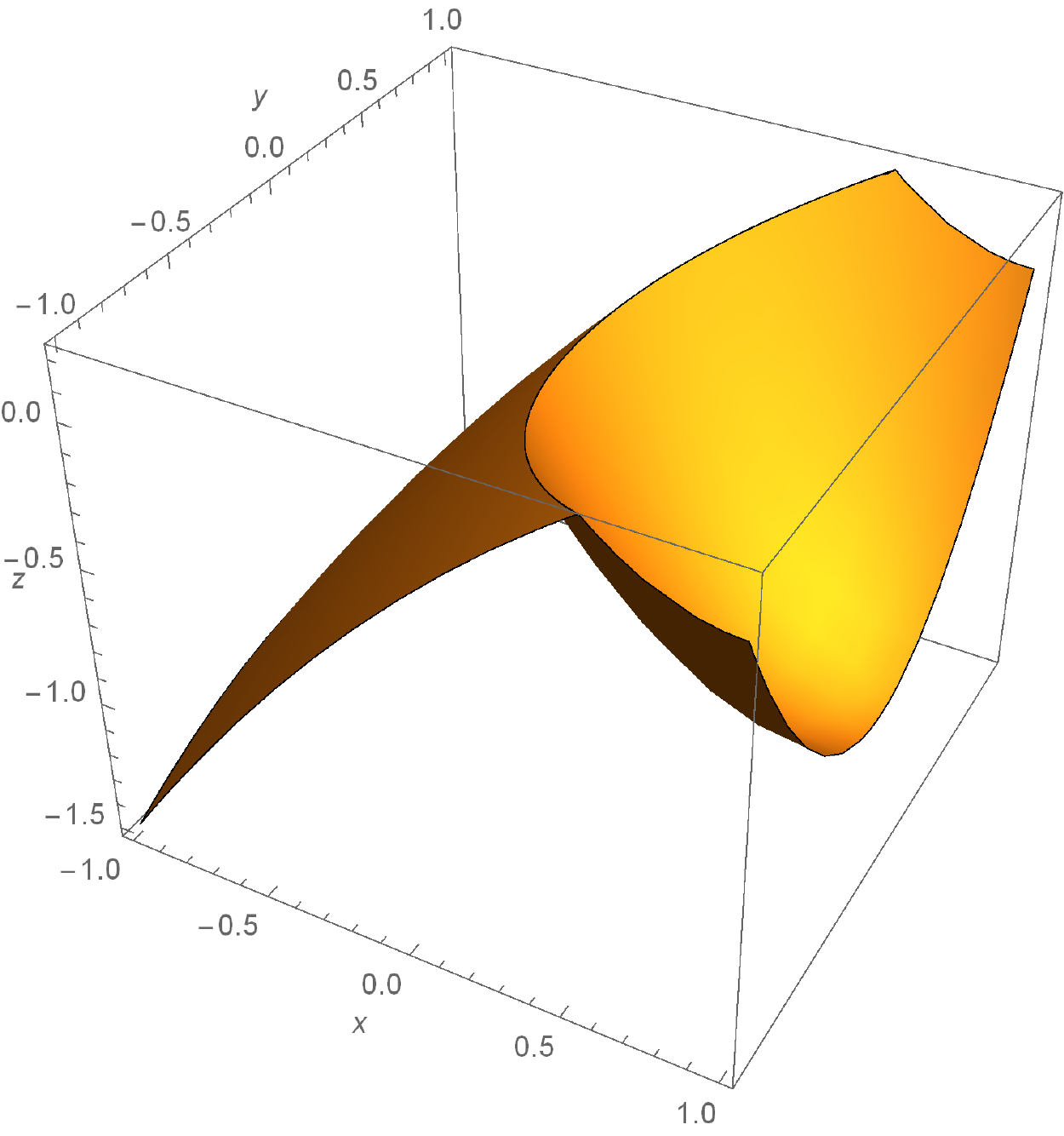}
\qquad
\includegraphics[height=200pt]{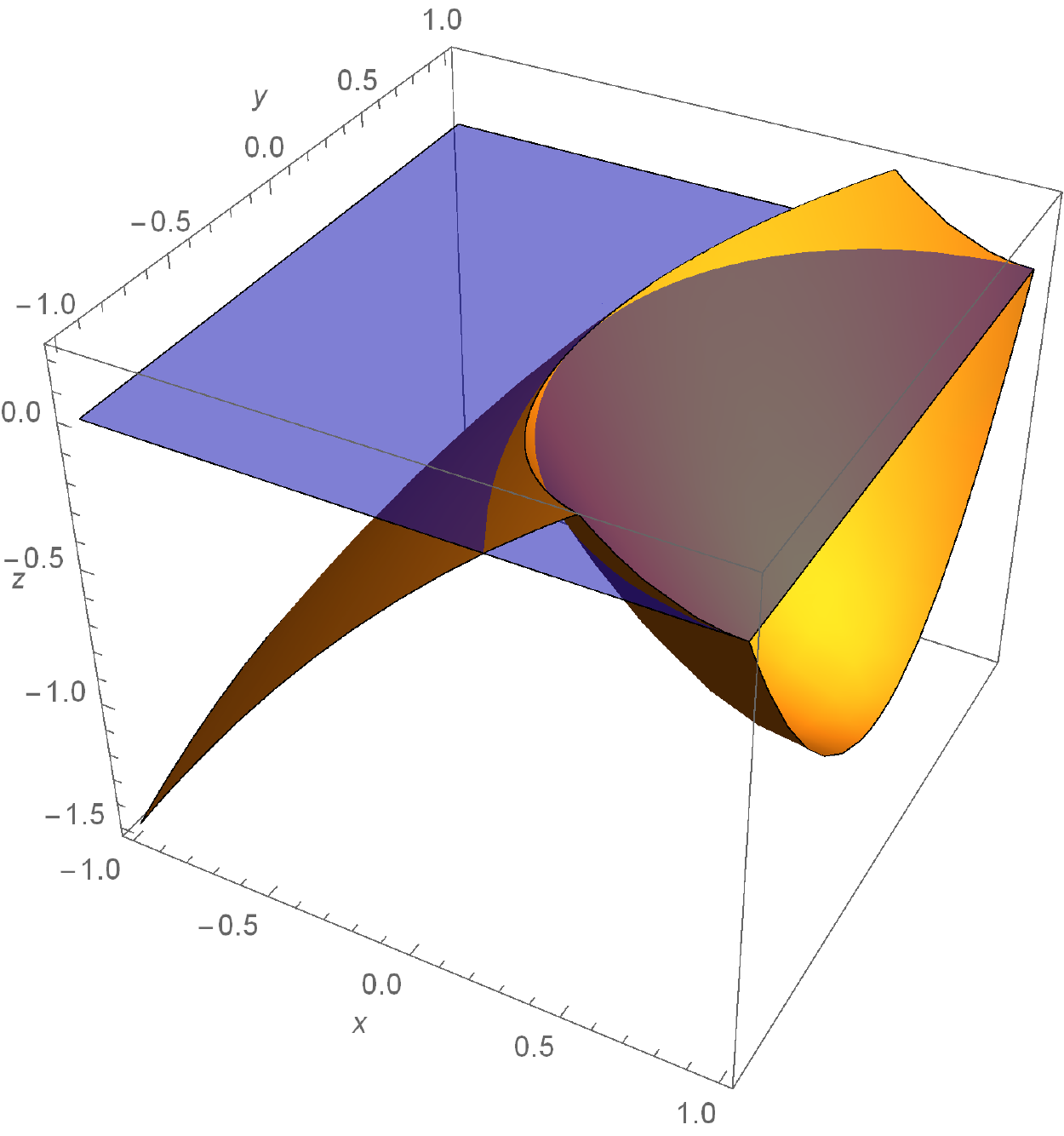}\\
\caption{Example}
\label{fig:example-affine-function}
\end{figure}

We now compute the outer limit directly. We have for the Fr\'echet subdifferential
$$
\partial f(x) =
\begin{cases}
\nabla f_1(x), &  f_1(x) <f_2(x),\\
\nabla f_2(x), &  f_1(x) >f_2(x),\\
\emptyset, &  f_1(x) =f_2(x), \nabla f_1(x) \neq \nabla f_2(x),\\
\nabla f_1(x), &  f_1(x) =f_2(x), \nabla f_1(x) = \nabla f_2(x).
\end{cases}
$$
Substituting the values and the gradients we have
$$
\partial f(x) =
\begin{cases}
(1-\frac{x}{2},\frac{y}{2})^T, & 12 x< 5 (x^2+ y^2)\\
(2x-2,2y)^T, &  12 x > 5 (x^2+ y^2)\\
\emptyset, &  12 x = 5 (x^2+ y^2).
\end{cases}
$$
It is not difficult to observe that
$$
\Limsup_{(x,y) \to 0_2\atop f(x,y) >f(0_2)} \partial f(x,y) \subseteq \Limsup_{(x,y) \to 0_2} \partial f(x,y) = \{(1,0)^T, (-2, 0)^T\}.
$$
On the other hand, observe that we can construct sequences of points converging to zero along the curves in the regions that correspond to $f_2>f_1>0$ and $0<f_2<f_1$ respectively. This works for points on the two curves
$$
3 x - x^2 - y^2 = 0 \qquad 11 x - 5 x^2 - 5 y^2 = 0
$$
shown in dashed and dotted lines in the last plot of Fig.~\ref{fig:example-affine}.
\begin{figure}[ht]
\centering
\includegraphics[height=200pt]{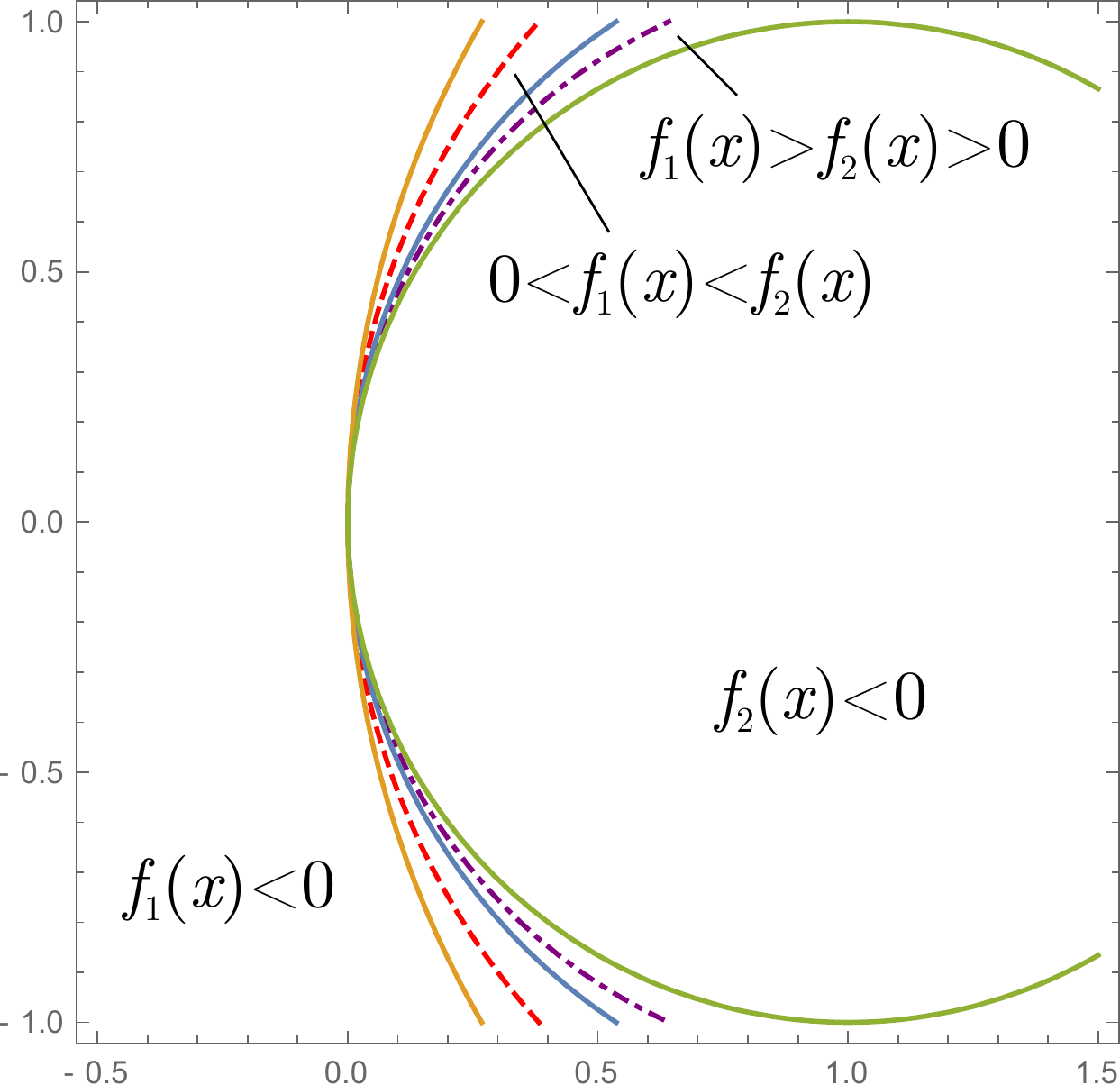}\\
\caption{Example~\ref{eg:paraboloids}.}
\label{fig:example-affine}
\end{figure}
\end{example}

\section{Acknowledgements}

The authors are grateful to Prof. Alex Kruger for insightful comments and corrections and also to Drs Kaiwen Meng  and Minghua Li for their comments and explanations that significantly improved both our understanding of the subject and the clarity of the exposition. We are also immensely grateful to the three anonymous Referees for their insightful comments, corrections and suggestions that significantly improved the quality of this paper. The research was partially supported by the Australian Research Council grant DE150100240.

\bibliographystyle{plain}
\bibliography{refs}

\end{document}